\renewcommand\section{\@startsection{section}{1}%
  \z@{.7\linespacing\@plus\linespacing}{.5\linespacing}%
 {\normalfont\bfseries\centering}}
\tikzset{%
    symbol/.style={%
        draw=none,
        every to/.append style={%
            edge node={node [sloped, allow upside down, auto=false]{$#1$}}}
    }
}
\tikzset{->-/.style={decoration={
  markings,
  mark=at position .5 with {\arrow{>}}},postaction={decorate}}}
  \tikzset{mid/.style 2 args={
        decoration={markings,
            mark= at position #2 with {\arrow{{#1}[scale=1.5]}} ,
        },
        postaction={decorate}
    },
mid/.default={>}{0.5}
}
  \tikzset{dot/.style={circle, fill,inner sep=0pt, minimum size=5pt}}
\tikzset{
  no line/.style={draw=none,
    commutative diagrams/every label/.append style={/tikz/auto=false}},
  from/.style args={#1 to #2}{to path={(#1)--(#2)\tikztonodes}}
  }
\newcommand{\cylinder}[1]{\scalebox{0.5}{\begin{tikzpicture}[baseline=0cm,decoration={markings, 
			mark= at position 0.5 with {\arrow{>[scale=1.5]}}}
		] 
		\draw (0,0) circle (5.5cm);
		\draw (0,0) circle (0.5cm);
		{#1}
	\end{tikzpicture}}}
\newcommand{\ld}[1]{{}^{\vee}{#1}}
\def\point#1(#2,#3)(#4,#5)
\def\line(#1,#2)(#3,#4,#5)(#6,#7) 
\def\rect(#1)(#2,#3) 
\newcommand{\eq}[1]{\begin{equation}
                     \begin{aligned} #1 \end{aligned}
                     \end{equation}}
\newcommand{\ov}[1]{\overline{#1}}
\newcommand{\ul}[1]{\underline{#1}}
\DeclareMathAlphabet{\mathpzc}{OT1}{pzc}{m}{it}
\newcommand{\Kbb}{\mathbb{K}}
\newcommand{\Rbb}{\mathbb{R}}
\newcommand{\Zbb}{\mathbb{Z}}
\newcommand{\onebb}{\mathbbm{1}}
\newcommand{\Graphrm}{\mathrm{Graph}}
\newcommand{\VGraphrm}{\mathrm{VGraph}}
\newcommand{\NGraphrm}{\mathrm{NGraph}}
\newcommand{\SNrm}{\mathrm{SN}}
\newcommand{\TVrm}{\mathrm{TV}}
\newcommand{\coevrm}{\mathrm{coev}}
\newcommand{\codomrm}{\mathrm{codom}}
\newcommand{\drm}{\mathrm{d}}
\newcommand{\domrm}{\mathrm{dom}}
\newcommand{\evrm}{\mathrm{ev}}
\newcommand{\idrm}{\mathrm{id}}
\newcommand{\prrm}{\mathrm{pr}}
\newcommand{\spanrm}{\mathrm{span}}
\newcommand{\Acal}{\mathcal{A}}
\newcommand{\Bcal}{\mathcal{B}}
\newcommand{\Ccal}{\mathcal{C}}
\newcommand{\Dcal}{\mathcal{D}}
\newcommand{\Ecal}{\mathcal{E}}
\newcommand{\Fcal}{\mathcal{F}}
\newcommand{\Csf}{\mathsf{C}}
\newcommand{\Zsf}{\mathsf{Z}}
\newcommand{\Cobsf}{\mathsf{Cob}}
\newcommand{\Cylsf}{\mathsf{Cyl}}
\newcommand{\Funsf}{\mathsf{Fun}}
\newcommand{\Lexsf}{\mathsf{Lex}}
\newcommand{\Modsf}{\mathsf{Mod}}
\newcommand{\Natsf}{\mathsf{Nat}}
\newcommand{\Vectsf}{\mathsf{Vect}}
\newcommand{\lbr}{\left\lbrace}
\newcommand{\rbr}{\right\rbrace}
\newcommand{\p}{\partial}
\newcommand{\TFG}{{}_{F}{T_G}}
\newcommand{\ZFG}{{}_{F}{\Zsf_G(\Ccal)}}
\renewcommand{\hom}{\mathrm{Hom}}
\numberwithin{equation}{section}
\theoremstyle{definition}
\newtheorem{defn}{Definition}[section]
\theoremstyle{plain}
\newtheorem{theo}[defn]{Theorem}
\theoremstyle{plain}
\newtheorem{lem}[defn]{Lemma}
\theoremstyle{remark}
\newtheorem{rem}[defn]{Remark}
\theoremstyle{remark}
\theoremstyle{plain}
\newtheorem{Cor}[defn]{Corollary}
\theoremstyle{plain}
\newtheorem{prop}[defn]{Proposition}
\theoremstyle{plain}
\theoremstyle{plain}
\theoremstyle{definition}
\begin{document}

\begin{flushright}
  {\tiny
   {\sf ZMP-HH/23-2}\\
  {\sf Hamburger$\;$Beitr\"age$\;$zur$\;$Mathematik$\;$Nr.$\;$938}\\[2mm]
     February 2023
  
  }
  \end{flushright}

  \title{Twisted Drinfeld Centers and Framed String-Nets}

\vspace{2cm}

\vspace{0.4cm}

\author{
Hannes Kn\"otzele, Christoph Schweigert, Matthias Traube}

\address{Hannes Kn\"otzele: Universit\"at Hamburg \\ 
   Bundesstra\ss e 55, 20146 Hamburg } 
\email{hannes.knoetzele@uni-hamburg.de}
\address{Christoph Schweigert: Universit\"at Hamburg \\ 
   Bundesstra\ss e 55, 20146 Hamburg } 
\email{christoph.schweigert@uni-hamburg.de}
\address{Matthias Traube: Universit\"at Hamburg \\ 
   Bundesstra\ss e 55, 20146 Hamburg } 
\email{matze.traube@gmail.com}
\begin{abstract}
We discuss a string-net construction on $2$-framed surfaces, taking as algebraic input a finite, rigid tensor category, which is assumed to be neither pivotal nor semi-simple. It is shown that circle categories of our framed string-net construction essentially compute Drinfeld centers twisted by powers of the double dual functor.
\end{abstract}

\maketitle

\tableofcontents


\section{Introduction}

Over the last few decades, topological field theories proved to be a very fruitful research area relating concepts from topology, categorical algebra and mathematical physics. A topological field theory (TFT) in $n$ dimensions with values in a symmetric monoidal category $\Ccal$ is a symmetric monoidal functor $\Fcal:\Cobsf^n\rightarrow \Ccal$, where $\Cobsf^n$ is a symmetric monoidal
category with closed $(n-1)$-dimensional topological manifolds as objects; morphisms are given by $n$-dimensional cobordisms. The symmetric monoidal product on $\Cobsf^n$ is given by disjoint union of manifolds. One can consider various tangential structures on objects and morphisms of $\Cobsf^n$, in particular an orientation or an $n$-framing. Then one speaks of oriented or framed TFTs, respectively. 
Most explicitly constructed examples of TFTs are oriented low-dimensional TFTs,
in dimensions $2$ and $3$.  Among the best-known examples of these are the 
Reshetkhin-Turaev \cite{reshetikhin1991invariants} and Turaev-Viro \cite{turaev1992state} TFTs, which are three-dimensional oriented TFTs with values in the category of finite-dimensional $\Kbb$-vector spaces $\Vectsf_\Kbb$, where $\Kbb$ is an algebraically closed field of characteristic zero. The Reshetikhin-Turaev TFT is based on link invariants derived from a modular tensor category, whereas the Turaev-Viro TFT is a state sum construction using a spherical fusion category (see e.g. \cite{turaev2016quantum} for a textbook account of both).

On the other hand, in structural investigations, the case of framed topological
field theories is a natural starting point. Indeed, the
cobordism hypothesis \cite{baez1995higher} is best understood \cite{Lurie}
starting from a suitable category of \textit{framed} cobordisms. In this spirit,
the construction of \cite{douglas2020dualizable} gives explicit categories
associated to framed circles by a $2$-dimensional TFT.

In this article, we address framed theories from the point of view of string-net
constructions. The string-net construction originally emerged in physics \cite{Levin:2004mi}; see however also \cite{walk4} for an early discussion.
A mathematical construction for string-nets that assigns vector spaces to oriented
$2$-manifolds appeared in \cite{kirillov2011string}.
The oriented string-net construction takes as input a spherical fusion category $\Ccal$ and produces for any $2$-dimensional oriented manifold $\Sigma$, possibly with boundary, a finite-dimensional $\Kbb$-vector space $\SNrm^\Ccal(\Sigma)$ with a geometric
action of a mapping class group. Moreover, in \cite{kirillov2011string} it was shown that there is an isomorphism of vector spaces $\SNrm^\Ccal(\Sigma)\simeq \TVrm_\Ccal(\Sigma)$ between the oriented string-net space and the state space of the Turaev-Viro TFT. Since then, string-nets have been used to construct correlators 
in RCFTs \cite{traube2022cardy, fuchs2021string, Schweigert:2019zwt} and have been extended to  non-spherical pivotal fusion categories \cite{runkel2020string} as input data and to manifolds with $G$-bundles
 \cite{Gstring}. 

In this paper, we present a string-net construction on $2$-framed $2$-manifolds, 
see section  \ref{framed construction} for the definition. Working with framed 
rather than with oriented $2$-manifolds means that we have more structure on
the geometric side; as a consequence,  our string-net construction needs as
an algebraic input datum only a  tensor category $\Ccal$, which needs to be
neither semi-simple nor pivotal. The framed string-net space is constructed in 
terms of $\Ccal$-colored oriented graphs, which have to be compatible with the $2$-framing: A $2$-framed two-dimensional manifold $\Sigma$ has two nowhere-vanishing and linearly independent vector fields $X$, $Y$. We only allow oriented
graphs $\Gamma\subset \Sigma$ whose edges are at no point tangent to the
$X$-vector field. This is a globalization to $2$-framed surfaces of the graphical calculus for 
tensor categories in the plane given in \cite{JOYAL199155}, where the $x$-axis 
and $y$-axis of the plane have very different roles and graphs are required to be
\textit{progressive}, i.e. they are not allowed to have tangent vectors pointing 
in the $x$-direction. 

We put the framed string-net construction to the test by computing circle 
categories $\Cylsf(\Csf_n,\Ccal)$ for $\Ccal$ a finite tensor category
that our construction associates to framed
circles. Such circles are classified by an integer $n\in \Zbb$ that counts how often
 the $2$-framing rotates around the circle (see figure \ref{different framings}). 
 In view of the results in \cite{douglas2020dualizable},
 we expect that these circle categories are related to Drinfeld centers twisted 
 by powers of the double dual functor. In fact,
 twisted Drinfeld centers $\ZFG$ can be defined for any pair of strong-monoidal
 functors $F,G:\Ccal\rightarrow \Ccal$: the objects of $\ZFG$ are pairs
 $(c,\gamma_{\bullet,c})$ consisting of an object $c\in \Ccal$ together with 
 a half-braiding $\gamma_{c,x}: F(c)\otimes x \xrightarrow{\simeq} x \otimes G(c)$.

To identify the circle category for the cylinder $\Csf_n$ with a twisted 
Drinfeld center, we use that the twisted Drinfeld center $\ZFG$ is equivalent 
to the category of modules for the twisted central monad $\TFG$ on $\Ccal$. 
We show in Theorem \ref{main theo} that the string-net construction  gives 
us the Kleisli category $\Ccal_{T_n}$ of a specific monad $T_n$  where the 
twisting is by a power of the bidual functor (which is monoidal): 
\eq{
\Cylsf(\Csf_n,\Ccal)\simeq \Ccal_{T_n}   \,\,. 
}
In Theorem \ref{main theo flat} we show that the twisted Drinfeld center
itself can be recovered, as a linear category by taking presheaves on the Kleisli
category for which the  pullback to a presheaf on $\Ccal$ is representable:
\eq{
\mathrm{PSh}_\Ccal(\Cylsf_n)\simeq \Zsf_n(\Ccal)
\label{eq: main result}
}
where $\Zsf_n(\Ccal)$ is the Drinfeld center twisted by the appropriate power 
of the double dual functor depending on $n$, cf. equation \eqref{Zn definition}. 
This allows us to recover twisted Drinfeld
centers from framed string-nets. The comparison with \cite[Corollary~3.2.3]{douglas2020dualizable}
shows complete coincidence. This provides a way to obtain twisted Drinfeld
centers in the spirit of planar algebras \cite{JonesPlanar}; 
they are closely related to tube algebras which can be
formulated as the annular category \cite{Jones} of a planar algebra.

\medskip

This paper is organized as follows. In two preliminary sections, we recall
in section \ref{sec: finite}  some facts 
and notation about finite tensor categories and in section
\ref{sec: DrinMod} about twisted Drinfeld centers and monads.
In this section, we show in particular in Proposition \ref{T-mod is pullback} 
how to obtain the Eilenberg-Moore category of a monad in terms of presheaves
on the Kleisli category whose pullback is representable. While this statement
is known in the literature, in particular  in a general context, we include
the proof for the benefit of the reader.

In section \ref{Graphical calculus for Finite Tensor Categories} we recall the 
graphical calculus of progressive graphs for monoidal categories that has been
introduced in \cite{JOYAL199155}. 
In section \ref{framed construction}, we first show in subsection
\ref{Locally Progressive Graphs} how to globalize the graphical calculus from section \ref{Graphical calculus for Finite Tensor Categories} to $2$-framed surfaces. 
This allows us to define in subsection \ref{Framed String-Net Spaces}  string-net spaces on $2$-framed surfaces, see in particular Definition \ref{framed string-net space}. 

Section \ref{cylinder section} is devoted to the study of circle categories:
in subsection \ref{$2$-Framings of the Circle and Framed Cylinders} we very briefly
discuss framings of cylinders, before we define framed circle categories in section
\ref{Circle Categories} and show in Theorem \ref{main theo} that the 
circle categories
are equivalent to Kleisli categories. Finally, Theorem \ref{main theo flat} 
in section \ref{Cylinder Kleisli} contains the main result (\ref{eq: main result}) and the extension to arbitrary framings
in Remark  \ref{extension to all n remark}. 

\vspace*{0.5cm}
\textbf{Acknowledgment:} The authors thank Gustavo Jasso, Ying Hong Tham and 
Yang Yang for useful discussions. CS and MT are supported by the
Deutsche Forschungsgemeinschaft (DFG, German Research Foundation) under SCHW1162/6-
1; CS is also supported by the DFG under Germany’s Excellence Strategy - EXC 2121 "Quantum
Universe" - 390833306.  HK acknowledges support by DFG under 460925688 (in the Emmy-Noether group of Sven M\"oller).

\section{Recollections on Finite Tensor Categories}
\label{sec: finite}

In this section, we recall some facts about finite tensor categories and at the same
time fix notation. Proofs and more detailed information can be found in e.g. \cite{egno,mac2013categories,kerler2001non}.

Throughout this paper, $\Kbb$ will be an algebraically closed field of characteristic zero. All monoidal categories will be assumed to be strict.

\subsection{Rigid Monoidal Categories}

An abelian monoidal category $(\Ccal, \otimes, \onebb)$ is \textit{$\Kbb$-linear} if it is enriched in $\Vectsf_\Kbb$ and if $\otimes:\Ccal\times\Ccal\rightarrow \Ccal$ is a bilinear functor. A \textit{linear functor} between $\Kbb$-linear categories is an additive functor, i.e. linear on $\hom$-spaces. For $\Kbb$-linear categories $\Dcal$, $\Bcal$, we denote the category of linear functors from $\Dcal$ to $\Bcal$ by $\Funsf_\Kbb(\Dcal,\Bcal)$. For a category $\Ccal$, we denote $\Ccal^{opp}$ for the opposite category, i.e. $\Ccal^{opp}$ has the same objects as $\Ccal$ and $\hom_{\Ccal^{opp}}(x,y)=\hom_\Ccal(y,x)$. For a monoidal category $(\Ccal,\otimes,\onebb)$, its \textit{opposite monoidal category} $\Ccal^{rev}\coloneqq \left(\Ccal^{opp},\otimes^{opp},\onebb\right)$ is the opposite category $\Ccal^{opp}$ endowed with the monoidal structure $x\otimes^{opp} y\coloneqq y \otimes x$ for $x,y\in \Ccal^{opp}$.

A monoidal category $\Ccal$ has \textit{left duals} if for every object
$x \in \Ccal$, there exists an object $\prescript{\vee}{}{x}\in \Ccal$,
called the \textit{left dual object} of $x$, together with a \textit{left coevaluation} $\coevrm_x: \onebb\rightarrow \prescript{\vee}{}{x}\otimes x$ and left evaluation $\evrm_x: x \otimes \prescript{\vee}{}{x} \rightarrow \onebb$ satisfying the usual two zigzag relations.
Similarly, $\Ccal$ has \textit{right duals} if for $x \in \Ccal$, there exists
an object  $x^\vee \in \Ccal$, called the \textit{right dual object}, together with
a \textit{right coevaluation} morphism $\widetilde{\coevrm}_x : \onebb\rightarrow x \otimes x^\vee$ and an \textit{evaluation} morphism $\widetilde{\evrm}_x : x^\vee \otimes x \rightarrow \onebb$ satisfying again the appropriate two zigzag relations. Equivalently, we could have defined a right dual object for $x \in \Ccal$ to be a left dual object for $x$ in $\Ccal^{rev}$.
A monoidal category is \textit{rigid} if it has both left and right duals. 

Left and right duality can be conveniently expressed in terms of strong monoidal functors $\Ccal^{rev}\rightarrow \Ccal$. To be more precise, the \textit{left dual functor} is defined as
\eq{
\prescript{\vee}{}{(\bullet)}:\Ccal^{rev}&\rightarrow \Ccal\\
x &\mapsto \prescript{\vee}{}{x}\\
\hom_{\Ccal^{rev}}(x,y)\ni f &\mapsto \prescript{\vee}{}{f}\in \hom_\Ccal(\prescript{\vee}{}{x},\prescript{\vee}{}{y})
}
with 
\eq{
\prescript{\vee}{}{f}\coloneqq \left[\prescript{\vee}{}{x}\xrightarrow{\coevrm_y\otimes \idrm_{\prescript{\vee}{}{x}}}\prescript{\vee}{}{y}\otimes y\otimes \prescript{\vee}{}{x}\xrightarrow{\idrm_{\prescript{\vee}{}{y}}\otimes f\otimes \idrm_{\prescript{\vee}{}{x}}} \prescript{\vee}{}{y}\otimes x\otimes \prescript{\vee}{}{x}\xrightarrow{\idrm_{\prescript{\vee}{}{y}}\otimes \evrm_x}\prescript{\vee}{}{y}\right]\quad .
}
Analogously, there is a \textit{right duality functor} 
\eq{
(\bullet)^\vee:\Ccal&\rightarrow \Ccal^{rev}\\
x &\mapsto x^\vee\\
\hom_{\Ccal}(x,y)\ni f &\mapsto f^\vee\in \hom_{\Ccal^{rev}}(x^\vee,y^\vee),
}
where
\eq{
f^\vee\coloneqq\left[y^\vee\xrightarrow{\idrm_{y^\vee}\otimes\widetilde{\coevrm}_x}y^\vee\otimes x\otimes x^\vee\xrightarrow{\idrm_{y^\vee}\otimes f\otimes \idrm_{x^\vee}} y^\vee\otimes y\otimes x^\vee\xrightarrow{\widetilde{\evrm}_y\otimes \idrm_{x^\vee}}x^\vee\right]\quad.
}

It is not hard to show that left and right duality functors are indeed strong monoidal functors. The following coherence result allows us to assume that left and right duality functors are strict and the two functors are  inverse functors:
\begin{lem}\label{duality strictness}\cite[Lemma~5.4]{shimizu2015pivotal}
For any rigid monoidal category $\Ccal$ that is in this lemma not supposed
to be strict, there exists a rigid monoidal category $\Dcal$ 
such that
\begin{enumerate}[label=\roman*)]
\item $\Ccal$ and $\Dcal$ are equivalent as monoidal categories.
\item $\Dcal$ is a strict monoidal category.
\item $\prescript{\vee}{}{(\bullet)}:\Dcal^{rev}\rightarrow \Dcal$ is a strict monoidal functor.
\item $\prescript{\vee}{}{(\bullet)}$ and $(\bullet)^\vee$ are inverse functors.
\end{enumerate}
\end{lem}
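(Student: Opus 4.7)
The plan is to extend Mac Lane's monoidal strictification to a strictification that also makes the duality functors strict and mutual inverses on the nose. By Mac Lane's coherence theorem one may assume at the outset that $(\Ccal,\otimes,\onebb)$ is strict monoidal, so the remaining task is to modify $\Ccal$ further without disturbing strictness of the tensor product or the monoidal equivalence class.

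The construction I would use is to formally index iterated duals at the level of objects. Fix once and for all, for every $x\in\Ccal$ and every $n\in\Zbb$, a representative $x^{[n]}\in\Ccal$ of the $n$-th iterated right dual of $x$, with $x^{[0]}=x$, $x^{[1]}=x^\vee$, $x^{[-1]}=\prescript{\vee}{}{x}$, together with coherent evaluation and coevaluation data between each adjacent pair $x^{[n]},x^{[n+1]}$; such choices exist because $\Ccal$ is rigid. Now define $\Dcal$ to have as objects finite sequences of pairs $((x_1,n_1),\ldots,(x_k,n_k))$ with $x_i\in\Ccal$ and $n_i\in\Zbb$, and hom-spaces
\eq{
\hom_\Dcal\bigl((x_i,n_i)_i,(y_j,m_j)_j\bigr) := \hom_\Ccal\!\bigl(x_1^{[n_1]}\otimes\cdots\otimes x_k^{[n_k]},\; y_1^{[m_1]}\otimes\cdots\otimes y_l^{[m_l]}\bigr),
}
with composition inherited from $\Ccal$. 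Tensor product is concatenation of sequences (strictly associative and unital with the empty sequence as unit), and the left and right duality functors act by reversing a sequence and shifting every index by $-1$ or $+1$, respectively. By construction they are strict monoidal $\Dcal^{rev}\to\Dcal$ and literally mutually inverse on both objects and morphisms, yielding (ii), (iii) and (iv). For (i), the functor $\Dcal\to\Ccal$ sending $((x_i,n_i))_i\mapsto x_1^{[n_1]}\otimes\cdots\otimes x_k^{[n_k]}$ is the identity on hom-spaces (hence fully faithful) and essentially surjective via the singletons $((x,0))$, so it is a monoidal equivalence.

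The main obstacle will be verifying that the strict duality functors on $\Dcal$ correspond, under this equivalence, to the original $\prescript{\vee}{}{(-)}$ and $(-)^\vee$ on $\Ccal$ in a way compatible with the rigidity structure. This reduces to the standard facts that duals are unique up to unique isomorphism and that a dual of a tensor product is the tensor product of duals in reverse order, together with a careful check that the chosen adjacent (co)evaluation data on representatives assemble into coherent (co)evaluation data for arbitrary sequences. All zigzag identities then follow from those in $\Ccal$. The computations are essentially bookkeeping, but the interaction of the index shift with the order-reversal in tensor products has to be tracked with some care.
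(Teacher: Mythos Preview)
The paper does not give its own proof of this lemma; it simply cites \cite[Lemma~5.4]{shimizu2015pivotal}. Your sketch is essentially the construction used there (and in closely related sources such as Ng--Schauenburg): replace objects by formal words in symbols $(x,n)$ with $n\in\Zbb$ recording an iterated dual, realize morphisms through a ``realization'' functor to $\Ccal$, take concatenation as the strict tensor product, and let the dualities act by reversing the word and shifting all indices by $\mp 1$. So at the level of strategy your proposal matches the cited proof.

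Two points worth tightening. First, the action of the strict duality functors on \emph{morphisms} is not ``literally'' given by the construction: it is obtained by transporting $\prescript{\vee}{}{(-)}$ in $\Ccal$ along the canonical identifications $x_k^{[n_k-1]}\otimes\cdots\otimes x_1^{[n_1-1]}\cong \prescript{\vee}{}{\bigl(x_1^{[n_1]}\otimes\cdots\otimes x_k^{[n_k]}\bigr)}$. That these identifications are coherent (so that the result is a strict monoidal functor and the two dualities are honest inverse \emph{functors}, not merely inverse on objects) is precisely the content of the uniqueness-of-duals argument you allude to, and it should be stated as the crux rather than relegated to ``bookkeeping''. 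Second, the preliminary Mac Lane strictification step is unnecessary: concatenation of words already gives a strict monoidal structure on $\Dcal$ regardless of whether $\Ccal$ was strict, and the realization functor is then a (strong, not strict) monoidal equivalence. Apart from these refinements your argument is correct.
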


\begin{rem}
We could have defined duality functors also with reversed directions, i.e. the left duality functor as functor $\prescript{\vee}{}{(\bullet)}:\Ccal\rightarrow\Ccal^{rev}$ and the right duality functor $(\bullet)^\vee:\Ccal^{rev}\rightarrow \Ccal$. From the previous Lemma, we get $\prescript{\vee}{}{\left((\bullet)^\vee\right)}\simeq \idrm_\Ccal$ and $\left(\prescript{\vee}{}{(\bullet)}\right)^\vee\simeq \idrm_\Ccal$. The double dual functors $\prescript{\vee\vee}{}{(\bullet)}$ 
and $(\bullet)^{\vee\vee}$ are monoidal functors; in general they are 
\textit{not} naturally isomorphic to the identity functor as monoidal functors. 
A pivotal structure amounts to the choice of a monoidal isomorphism; in this 
	paper, we do not require the existence of a pivotal structure.
\end{rem}

\begin{defn} \begin{enumerate}
		\item
	A $\Kbb$-linear category is \textit{finite}, if it is equivalent to the category $A-\Modsf$ of finite-dimensional modules over a finite-dimensional $\Kbb$-algebra $A$.
	\item
	A \textit{finite tensor category} is a finite rigid monoidal category where
	the tensor product is $\Kbb$-bilinear on morphisms.
	\end{enumerate}
\end{defn}

In section \ref{Graphical calculus for Finite Tensor Categories} 
and \ref{framed construction}, we use the term tensor category for a
strict monoidal category \cite{JOYAL199155}.

\begin{rem}
	\begin{enumerate}
		\item For an equivalent intrinsic characterization of finite linear categories, we
		refer to \cite[section~1.8]{egno}. In particular, the morphism spaces
		of a finite category $\Ccal$ are finite-dimensional $\Kbb$-vector spaces and $\Ccal$ has a finite set of isomorphism classes of simple objects.
		\item
		A finite tensor category $\Ccal$ is, in general, neither semi-simple nor pivotal.
	\end{enumerate}

\end{rem}

A linear functor $F:\Ccal\rightarrow \Dcal$ between $\Kbb$-linear categories is not necessarily exact. In case $\Ccal$ and $\Dcal$ are finite tensor categories, it turns out that being left (right) exact is equivalent to admitting a left (right) adjoint.

\begin{theo}\cite[Proposition~1.7]{douglas2019balanced}
A functor $F:\Ccal\rightarrow \Dcal$ between finite linear categories is left (right) exact if and only if it admits a left (right) adjoint. 
\end{theo}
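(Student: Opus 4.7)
One direction follows from a general categorical fact: left adjoints preserve colimits and right adjoints preserve limits, so any functor admitting a left (resp.\ right) adjoint is automatically left (resp.\ right) exact. For the converse, I would first use the defining description of finite linear categories to pick equivalences $\Ccal \simeq A\text{-}\Modsf$ and $\Dcal \simeq B\text{-}\Modsf$ with $A, B$ finite-dimensional $\Kbb$-algebras, reducing the problem to showing that every $\Kbb$-linear left exact (resp.\ right exact) functor $F$ between finite-dimensional module categories admits a left (resp.\ right) adjoint.

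The core case is the right-exact one, which I would handle via an Eilenberg--Watts type argument. The object $F(A) \in B\text{-}\Modsf$ inherits a commuting right $A$-action from right multiplication of $A$ on itself, so it acquires the structure of a finite-dimensional $(B,A)$-bimodule. Since $A$ is noetherian and finite-dimensional, every $M \in A\text{-}\Modsf$ admits a finite presentation $A^{\oplus m} \to A^{\oplus n} \to M \to 0$; applying $F$ and invoking $\Kbb$-linearity together with right exactness identifies $F$ naturally with $F(A) \otimes_A -$. The standard tensor--hom adjunction then supplies a right adjoint $\hom_B(F(A),-) : B\text{-}\Modsf \to A\text{-}\Modsf$, which preserves finite-dimensionality because $F(A)$ is finitely generated over $A$.

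To handle the left-exact case, I would reduce to the preceding argument via the $\Kbb$-linear duality $(-)^\ast \coloneqq \hom_\Kbb(-,\Kbb)$, which gives an anti-equivalence $A\text{-}\Modsf^{opp} \xrightarrow{\sim} A^{opp}\text{-}\Modsf$ that exchanges finite limits with finite colimits. Conjugating a left exact $F$ by this duality on both sides produces a right exact $\Kbb$-linear functor $\widehat F : A^{opp}\text{-}\Modsf \to B^{opp}\text{-}\Modsf$, to which the Eilenberg--Watts construction applies, and the resulting right adjoint of $\widehat F$ dualizes back to a left adjoint of $F$.

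The main obstacle is the Eilenberg--Watts step: one must check that the isomorphism $F \cong F(A) \otimes_A -$ is natural in $M$ independently of the chosen finite presentation, and that the bimodule structure on $F(A)$ is compatible with the functoriality of $F$. Once this is set up, the remaining verifications (that the hom-tensor adjunction is well-defined on finite-dimensional modules, and that duality interchanges the left- and right-exact pictures) are routine.
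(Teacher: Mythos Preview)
The paper does not supply its own proof of this statement; it is quoted as \cite[Proposition~1.7]{douglas2019balanced} and used as a black box in the subsequent discussion. There is therefore nothing in the present paper to compare your argument against.

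That said, your argument is correct and is the standard route: reduce to module categories via the definition of a finite linear category, invoke Eilenberg--Watts to identify a right-exact $\Kbb$-linear functor with $F(A)\otimes_A(-)$ and read off the right adjoint from tensor--hom, then dualize for the left-exact case. One small slip: when you argue that $\hom_B(F(A),-)$ lands in finite-dimensional $A$-modules, the relevant condition is that $F(A)$ is finitely generated over $B$ (or simply finite-dimensional over $\Kbb$), not over $A$; you are taking $B$-linear maps, so it is the $B$-module structure that bounds the size of the hom space. This does not affect correctness, since in the present finite-dimensional setting both finiteness conditions hold automatically.
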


We note several consequences:
by Lemma \ref{duality strictness} the duality functors are inverses and thus adjoints. Hence both functors are exact.  Due to the existence of left and right duals,
the tensor product of a finite tensor category is an exact functor in both elements. 
Finally, given two finite linear
categories $\Dcal, \Ecal$, we denote the category of left exact functors
from $\Dcal$ to $\Ecal$ by $\Lexsf(\Dcal,\Ecal)$.

\subsection{(Co-)Ends in Finite Tensor Categories}

Coends, monads and their module categories will be crucial for relating circle categories obtained from framed string-nets to twisted Drinfeld centers. In
this subsection, we recall necessary definitions and results. Most of the results can be found in \cite[Chapter~VI and IX.6]{mac2013categories}. Throughout this section $\Ccal$ will be a finite tensor category. Some of the results hold in greater generality; we refer to 
\cite[Chapter~IX.6 and IX.7]{mac2013categories}.

Let $\Acal$ be an abelian $\Kbb$-linear category, $H:\Ccal\times \Ccal^{opp}\rightarrow \Acal$ a bilinear bifunctor and $a \in \Acal$ be an object of $\Acal$. A \textit{dinatural transformation from $H$ to $a$} consists of a family of maps $\lbr \psi_c : H(c,c)\rightarrow a\rbr_{c \in \Ccal}$ such that $\psi_d \circ H(f,\idrm_d)=\psi_c \circ H(\idrm_c, f)$ for all $f \in \hom_\Ccal(c,d)$.

\begin{defn} The \textit{coend of $H$} is  an object $\int^{c \in \Ccal} H(c,c)$,
	together with a universal dinatural transformation $\lbr \iota_c : H(c,c)\rightarrow \int^{c \in \Ccal } H(c,c)\rbr$. This means that  for any  
	dinatural transformation $\lbr \psi_c : H(c,c) \rightarrow a\rbr$, there exists a unique morphism \linebreak $\tau\in\hom_\Acal(\int^{c \in \Ccal} H(c,c),a)$ such that the following diagram commutes
\begin{center}
\begin{tikzpicture}
\node at (0,0) (a) {$H(c,d)$};
\node at ($(a)+(6,0)$) (b) {$H(d,d)$};
\node at ($(a)+(0,-4)$) (c) {$H(c,c)$};
\node at ($(c)+(6,0)$) (d) {$\int^{c \in \Ccal} H(c,c)$};
\node at ($(d)+(2,-2)$) (e) {$a$};

\draw[->] (a) to node[above] {$H(f,\idrm_d)$} (b);
\draw[->] (a) to node[left] {$H(\idrm_c,f)$} (c);
\draw[->] (b) to node[right] {$\iota_d$} (d);
\draw[->] (c) to node[below] {$\iota_c$} (d);
\draw[->] (b) to[bend left] node[right] {$\psi_d$} (e);
\draw[->] (c) to[bend right] node[below] {$\psi_c$} (e);
\draw[->] (d) to node[above] {$\tau$} (e);

\end{tikzpicture}
\end{center}

for all $(c,d)\in \Ccal\times\Ccal^{opp}$ and $f: c \to d$.  
\end{defn}

\begin{lem}\label{coend existence}\cite[Corollary~5.1.8]{kerler2001non} If $H:\Ccal\times \Ccal^{opp}\rightarrow \Acal$ is bilinear functor exact in both 
arguments, the coend $\int^{c \in \Ccal} H(c,c)$ exists.
\end{lem}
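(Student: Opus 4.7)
The plan is to construct the coend explicitly as a finite coequalizer in $\Acal$, using the fact that a finite tensor category $\Ccal$ is equivalent to $A\text{-}\Modsf$ for a finite-dimensional $\Kbb$-algebra $A$ and therefore admits a projective generator whose endomorphism algebra controls the entire category.

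First I would fix a projective generator $P \in \Ccal$ with $A \coloneqq \Endrm_\Ccal(P)$, so that every $c \in \Ccal$ admits a finite presentation $P^{m} \to P^{n} \to c \to 0$. I would then argue that a dinatural transformation $\{\psi_c : H(c,c) \to a\}_{c \in \Ccal}$ is completely determined by its component $\psi_P : H(P,P) \to a$, and that the only obstruction to specifying such a dinatural transformation is the dinaturality requirement along endomorphisms of $P$. Concretely, dinaturality on $f : P \to P$ in $A$ forces $\psi_P \circ H(f,\idrm_P) = \psi_P \circ H(\idrm_P, f)$, which is to say that $\psi_P$ must coequalize the left and right $A$-actions on $H(P,P)$ (the left action coming from functoriality of $H(-,P)$ and the right action from functoriality of $H(P,-)$).

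This motivates defining the candidate coend $L$ as the coequalizer, in the abelian category $\Acal$, of the pair
\[
\bigoplus_{a \in B} H(P,P) \;\rightrightarrows\; H(P,P) ,
\]
where $B$ is a $\Kbb$-basis of the finite-dimensional algebra $A$ and the two maps are the components $H(a, \idrm_P)$ and $H(\idrm_P, a)$ for $a \in B$. This is a finite diagram in $\Acal$, so the coequalizer exists. Call the canonical projection $\iota_P : H(P,P) \to L$. I would then extend the family $\iota_P$ to a dinatural family $\{\iota_c\}_{c \in \Ccal}$ by, for each $c$, choosing a presentation $P^m \xrightarrow{\alpha} P^n \xrightarrow{\pi} c \to 0$, applying $H(-, c)$ and $H(c, -)$ to obtain compatible exact sequences in $\Acal$, and using exactness of $H$ in both arguments together with the coequalizer relation to produce a unique $\iota_c : H(c,c) \to L$ making the appropriate square commute.

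The main obstacle will be the two independence checks in this extension step: showing that the map $\iota_c$ so constructed does not depend on the chosen presentation of $c$, and that the family $\{\iota_c\}$ is genuinely dinatural along arbitrary morphisms $f : c \to d$ in $\Ccal$ (not merely those coming from $P$). Both reduce to diagram chases that combine the exactness of $H$ in both variables with the fact that any morphism $f : c \to d$ lifts to a morphism of $P$-presentations; once this is in place, the universal property of $L$ relative to dinatural families transfers directly from $\psi_P$ to $\{\psi_c\}$, and the uniqueness of the induced map $\tau : L \to a$ is automatic from the uniqueness of the coequalizer factorization.
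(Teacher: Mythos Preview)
The paper does not give its own proof of this lemma; it is simply quoted from \cite[Corollary~5.1.8]{kerler2001non}. So there is no in-paper argument to compare against.

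On its own merits, your outline is the standard proof and is essentially what one finds in Kerler--Lyubashenko: reduce the (a priori large) coend diagram to a finite coequalizer by using a projective generator $P$ of the finite category $\Ccal$, identify the dinaturality constraints with the coequalization of the left and right $\Endrm_\Ccal(P)$-actions on $H(P,P)$, and then propagate the universal component $\iota_P$ to all objects via projective presentations and the exactness of $H$ in each variable. The two verification steps you flag (independence of presentation and dinaturality along arbitrary morphisms) are indeed the places where the work lies; both follow from lifting morphisms to maps of presentations and repeatedly using right exactness of $H(-,d)$ and $H(c,-)$. One small point worth making explicit in your write-up: when you say a dinatural family is ``completely determined by $\psi_P$'', you are implicitly using bilinearity of $H$ to split $H(P^{n},P^{n})$ and to force the off-diagonal components of $\psi_{P^{n}}$ to vanish; this is routine but should be stated, since dinaturality alone (without additivity) would not give it.
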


%
%

\begin{defn}\cite{LyubaRibbon} Let $\Dcal$, $\Ccal$ be finite tensor categories and $\Ecal$ a $\Kbb$-linear category. Assume that the functor 
$H:\Dcal\times\Ccal\times\Ccal^{opp}\rightarrow \Ecal$ is
left exact in both arguments. The \textit{left exact coend of $H$} is an 
object $\oint^{c \in \Ccal} H(\bullet;c,c)$ in the category $\Lexsf(\Dcal,\Ecal)$
of left exact functors, together with a universal dinatural transformation $\big\{ \iota_c:H(\bullet;c,c)\rightarrow \oint^{c \in \Ccal} H(\bullet;c,c)\big\}$ 
consisting of morphisms in $\Lexsf(\Dcal,\Ecal) $.
\end{defn}


\section{Twisted Drinfeld Centers and Monads}
\label{sec: DrinMod}

In this section, we introduce twisted Drinfeld centers of monoidal categories
and review their description as Eilenberg-Moore categories over
monads. String-net constructions do not directly yield Eilenberg-Moore
categories; hence we develop an explicit construction of the Eilenberg-Moore category
of a monad from its Kleisli category.

\subsection{Monadicity of Twisted Drinfeld Centers}\label{twisted Drinfeld}

As before, $\Ccal$ is in this section a finite tensor category. 

The \textit{Drinfeld center} $\Zsf(\Ccal)$ of a monoidal category $\Ccal$ is a categorification of the notion of a center of an algebra. It has as objects pairs $(X,\gamma_{\bullet,x})$, with a natural isomorphism
$
\gamma_{\bullet,x}:\bullet\otimes x\xrightarrow{\simeq}x \otimes \bullet 
$,
called the \textit{half-braiding} such that the identity
$$
\gamma_{c \otimes d,x}=(\gamma_{c,x}\otimes \idrm_{d})\circ (\idrm_c \otimes \gamma_{d,x})
$$
holds for all $c,d \in \Ccal$. The following generalization is well-known:

\begin{defn} Let $F,G:\Ccal\rightarrow \Ccal$ strict $\Kbb$-linear monoidal endofunctors. The \textit{twisted Drinfeld center} $\ZFG$ is the following category:
\begin{itemize}
\item \textit{Objects} are pairs  $(x,\gamma_{\bullet,x})$, where 
\eq{
\gamma_{\bullet,x}:F(\bullet)\otimes x \xrightarrow{\simeq} x \otimes G(\bullet)
}
is a natural isomorphism satisfying
\eq{\label{Drinfeld Hexagon}
\gamma_{c\otimes d,x}=(\gamma_{c,x}\otimes \idrm_{G(d)})\circ (\idrm_{F(c)}\otimes \gamma_{d,x})
}
for all $c,d\in\Ccal$.
\item A \textit{morphism} $f: (x,\gamma_{\bullet,x}) \to (y,\gamma_{\bullet,y})$ is a morphism $f \in \hom_\Ccal(x,y)$ such that
\eq{
\left[F(c)\otimes x \xrightarrow{\gamma_{c,x}}x\otimes G(c)\xrightarrow{f\otimes \idrm}y\otimes G(c)\right]=\left[F(c)\otimes x\xrightarrow{\idrm\otimes f}F(c)\otimes y\xrightarrow{\gamma_{c,y}}y\otimes G(c)\right]\; . 
} 
\end{itemize} 
\end{defn}

The monoidal functors we will be interested in are powers of the double duals.
Specifically, we consider the following cases
\eq{\label{Zn definition}
	\Zsf_n(\Ccal)\coloneqq \begin{cases} {}_{\left({}^{\vee\vee}{}{(\bullet)}\right)^{n-1}}{\Zsf}_{\idrm_\Ccal}(\Ccal),\; &n\in \Zbb_{>0}, \\
		{}_{\left(\bullet\right)^{\vee\vee}}{\Zsf}_{\idrm_\Ccal}(\Ccal), &n=0, \\
		{}_{(\bullet)^{\vee\vee}}{\Zsf}_{\left({}^{\vee\vee}{}{(\bullet)}\right)^{-n}}, &n\in \Zbb_{<0},
	\end{cases}
}
which include for $n=1$ the usual Drinfeld center $\Zsf(\Ccal)$.
The category ${}_{(\bullet)^{\vee\vee}}{\Zsf}_{\idrm_\Ccal}(\Ccal)$ 
obtained for $n=0$ is known as the \textit{trace} of $\Ccal$, see e.g. \cite[Definition 3.1.4]{douglas2020dualizable}. 

These categories can be described in terms of monads on $\Ccal$. A \textit{monad} on a category $\Ccal$ is a triple $(T,\mu,\eta)$ consisting of an endofunctor $T: \Ccal \rightarrow \Ccal$ and natural transformations $\mu: T^2 \Rightarrow T$, $\eta : \idrm_\Ccal \Rightarrow T$ such that the diagrams
\begin{center}
\begin{tikzcd} 
T^3(c)\ar[r,"T(\mu_c)"] \ar[d,"\mu_{T(c)}"']& T^2(c)\ar[d,"\mu_c"]\\
T^2(c)\ar[r,"\mu_c"'] & Tc
\end{tikzcd}\hspace*{2cm}
\begin{tikzcd}
Tc\ar[r,"\eta_{T(c)}"] \ar[dr,"\idrm"'] & T^2(c)\ar[d,"\mu_c"] & Tc\ar[l,"T(\eta_c)"']\ar[ld,"\idrm"]\\
& Tc & 
\end{tikzcd}
\end{center}
\noindent
commute for all $c \in \Ccal$. A \textit{module} for the monad $(T,\mu,\eta)$ is a pair $(d,\rho)$, consisting of an object $d \in \Ccal$ and a morphism $\rho : Td \rightarrow d$ such that the diagrams

\begin{center}
\begin{tikzcd}
T^2(d)\ar[r,"\mu_d"]\ar[d,"T(\rho)"'] & Td\ar[d,"\rho"]\\
Td\ar[r,"\rho"'] & d
\end{tikzcd}\hspace*{2cm}
\begin{tikzcd}
d \ar[r,"\eta_d"]\ar[dr,"\idrm"'] & Td\ar[d,"\rho"]\\
 & d
 \end{tikzcd}
\end{center}

\noindent 
commute. A \textit{morphism between two $T$-modules $(d_1,\rho)$, $(d_2,\lambda)$} is a morphism $f \in \hom_\Ccal(d_1,d_2)$ such that the diagram
\begin{center}
\begin{tikzcd}
Td_1\ar[d,"T(f)"']\ar[r,"\rho"] & d_1 \ar[d,"f"]\\
Td_2\ar[r,"\lambda"'] & d_2
\end{tikzcd}
\end{center}
commutes.

\noindent
We denote the category of $T$-modules or \textit{Eilenberg-Moore category}  by
$T-\Modsf$ or $\Ccal^T$. It comes with a forgetful functor $U^T$ to $\Ccal$.

Given two exact $\Kbb$-linear strict monoidal endofunctors $F,G$ of a finite 
tensor category $\Ccal$, the functor
\eq{
Q:\Ccal \times \Ccal^{opp} &\rightarrow \Funsf(\Ccal,\Ccal)\\
 (c,d)&\mapsto F(c)\otimes\bullet\otimes G(\prescript{\vee}{}{d})
 }
is exact in both arguments.
 Thus, by Lemma \ref{coend existence}, the coend
\eq{
\TFG(\bullet)\coloneqq\int^{c\in \Ccal}F(c)\otimes\bullet\otimes G(\prescript{\vee}{}{c})\in \Funsf(\Ccal,\Ccal)
}
exists. It is a known fact (cf. \cite[Section~3.3]{shimizu2017ribbon}) that $\TFG(\bullet)$ is a monad in $\Ccal$ with multiplication 
induced by the dinatural family
\eq{
\Big\{ \big[ F(d)\otimes F(c)\otimes \bullet \otimes G(\prescript{\vee}{}{c})\otimes G(\prescript{\vee}{}{d}) & = F(d\otimes c)\otimes \bullet \otimes G(\prescript{\vee}{}{(d\otimes c)}) \\
&  \xrightarrow{\iota_{d\otimes c}} \int^{a\in \Ccal}F(a)\otimes\bullet\otimes G(\prescript{\vee}{}{a}) \big] \Big\}_{c,d\in\Ccal}
}
where $\iota$ is the dinatural family of the coend $\TFG$. Associativity of the multiplication follows from the Fubini theorem 
\cite[Chapter~IX.7]{mac2013categories}
for iterated coends. The following proposition relates twisted Drinfeld
centers to Eilenberg-Moore categories of this monad, which we call the \textit{twisted
central monad}:

\begin{prop}\cite[Lemma~3.8]{shimizu2017ribbon}\label{monadicity}
There is an isomorphism of $\Vectsf_\Kbb$-enriched categories \eq{
\TFG-\Modsf\simeq \ZFG
}
commuting with forgetful functors
\begin{center}
\begin{tikzcd}
\TFG-\Modsf\ar[rr,"\simeq"] \ar[rdd,"U^{T}"']& & \ZFG \ar[ddl]\\
& & \\
& \Ccal & .
\end{tikzcd}
\end{center}
\end{prop}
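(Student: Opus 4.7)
The plan is to construct explicit mutually inverse functors $\Phi: \TFG-\Modsf \to \ZFG$ and $\Psi: \ZFG \to \TFG-\Modsf$ that act as the identity on underlying objects of $\Ccal$; compatibility with the forgetful functors then holds by construction, and $\Kbb$-linearity follows from $\Kbb$-linearity of the coend and of $\otimes$.

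For $\Psi$, I exploit that since $G$ is strict monoidal, $G(\prescript{\vee}{}{c})$ is canonically a left dual of $G(c)$ with evaluation $G(\evrm_c): G(c) \otimes G(\prescript{\vee}{}{c}) \to \onebb$ and coevaluation $G(\coevrm_c): \onebb \to G(\prescript{\vee}{}{c}) \otimes G(c)$. Given $(d,\gamma_{\bullet,d}) \in \ZFG$, I set
$$\widetilde{\rho}_c \coloneqq (\idrm_d \otimes G(\evrm_c)) \circ (\gamma_{c,d} \otimes \idrm_{G(\prescript{\vee}{}{c})}): F(c) \otimes d \otimes G(\prescript{\vee}{}{c}) \to d,$$
a dinatural family in $c$ by naturality of $\gamma_{\bullet,d}$. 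The universal property of the coend yields a unique $\rho: \TFG(d) \to d$ with $\rho \circ \iota_c = \widetilde{\rho}_c$. Conversely, given a $\TFG$-module $(d,\rho)$, I define
$$\gamma_{c,d} \coloneqq ((\rho \circ \iota_c) \otimes \idrm_{G(c)}) \circ (\idrm_{F(c) \otimes d} \otimes G(\coevrm_c)): F(c) \otimes d \to d \otimes G(c),$$
which is natural in $c$ by dinaturality of $\rho \circ \iota_\bullet$. A standard zigzag computation shows the two assignments are mutually inverse on underlying data.

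Under this correspondence, the hexagon axiom \eqref{Drinfeld Hexagon} translates, via the Fubini theorem for iterated coends (identifying $\TFG(\TFG(d))$ with $\int^{c,c'} F(c \otimes c') \otimes d \otimes G(\prescript{\vee}{}{(c \otimes c')})$ using strict monoidality of $F$ and $G$), into the associativity square for the monad action, while the unit axiom $\rho \circ \eta_d = \idrm_d$ corresponds to $\gamma_{\onebb,d} = \idrm_d$. On morphisms, the module condition $f \circ \rho_1 = \rho_2 \circ \TFG(f)$ is equivalent componentwise, by the universal property of the coend, to the half-braiding compatibility condition on $f$; this gives a bijection of morphism spaces.

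The main obstacle is verifying invertibility of the half-braiding $\gamma_{c,d}$ produced from a module, as required by the definition of $\ZFG$. An inverse is constructed from $\rho$ evaluated at the component indexed by $\prescript{\vee}{}{c}$, using that the strict monoidal structure of $F$ canonically identifies $F(\prescript{\vee}{}{c})$ with a left dual of $F(c)$; the hexagon together with the zigzag relations then confirms this candidate is a two-sided inverse. From invertibility one also recovers $\gamma_{\onebb,d} = \idrm_d$ independently: the hexagon with both arguments equal to the monoidal unit forces $\gamma_{\onebb,d}$ to be idempotent, hence equal to the identity since it is invertible. Consequently $(d,\gamma_{\bullet,d})$ is a genuine object of $\ZFG$, and $\Phi, \Psi$ assemble into the claimed isomorphism of $\Vectsf_\Kbb$-enriched categories commuting with the forgetful functors.
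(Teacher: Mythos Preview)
The paper does not supply its own proof of this proposition; it simply cites \cite[Lemma~3.8]{shimizu2017ribbon}. Your argument is correct and is precisely the standard proof one finds in that reference (and in the earlier literature on the central monad): translate a half-braiding into an action via the universal property of the coend and back via the mate construction, match the hexagon with associativity and $\gamma_{\onebb,d}=\idrm_d$ with the unit law, and recover invertibility of $\gamma_{c,d}$ from rigidity using the component at $\prescript{\vee}{}{c}$.
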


We denote by $T_n$ the monad on $\Ccal$ describing
the Drinfeld center $\Zsf_n(\Ccal)$ twisted by a power of the bidual,
cf.\ (\ref{Zn definition}).
Proposition \ref{monadicity} is  a statement about $\Vectsf_\Kbb$-enriched categories. However, the following corollary is immediate from the proposition and \cite[Lemma~2.7]{shimizu2016unimodular}, as $\TFG$-is a right exact functor.

\begin{Cor}\label{Z abelian}
$\ZFG$ is a finite $\Kbb$-linear category. 
\end{Cor}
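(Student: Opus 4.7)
The plan is to reduce the statement to a known result about Eilenberg--Moore categories of right exact monads on finite linear categories, using the monadicity theorem that was just stated. Concretely, Proposition \ref{monadicity} identifies $\ZFG$ with $\TFG\text{-}\Modsf$ as $\Vectsf_\Kbb$-enriched categories, so it suffices to show that the Eilenberg--Moore category of the twisted central monad $\TFG$ on $\Ccal$ inherits the structure of a finite $\Kbb$-linear category. This is exactly the content of \cite[Lemma~2.7]{shimizu2016unimodular}, which asserts that for a right exact monad $T$ on a finite linear category $\Ccal$, the category $T\text{-}\Modsf$ is again finite and linear.

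To apply that lemma I need to verify that $\TFG$ is a right exact $\Kbb$-linear endofunctor of $\Ccal$. Linearity is immediate since $\TFG$ is built from the $\Kbb$-linear tensor product of $\Ccal$ together with the monoidal endofunctors $F,G$ and the left dual functor $\prescript{\vee}{}{(\bullet)}$. For right exactness, recall that $F$ and $G$ are exact (they are monoidal endofunctors of a finite tensor category, hence admit both adjoints), the left dual functor is exact by Lemma \ref{duality strictness} (as it has an inverse), and $\otimes$ is exact in each argument. Therefore the bifunctor
\eq{
Q:\Ccal\times \Ccal^{\mathrm{opp}}\to \Funsf(\Ccal,\Ccal),\quad (c,d)\mapsto F(c)\otimes\bullet\otimes G(\prescript{\vee}{}{d})
}
is exact in each argument. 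Since coends are colimits, and colimits of diagrams of right exact functors are again right exact, the coend
\eq{
\TFG(\bullet)=\int^{c\in\Ccal} F(c)\otimes \bullet \otimes G(\prescript{\vee}{}{c})
}
is a right exact endofunctor of $\Ccal$.

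Combining these ingredients, $\TFG\text{-}\Modsf$ is a finite $\Kbb$-linear category by \cite[Lemma~2.7]{shimizu2016unimodular}, and transporting the structure along the isomorphism of Proposition \ref{monadicity} yields that $\ZFG$ is a finite $\Kbb$-linear category as well. The only nontrivial ingredient is the right exactness check for $\TFG$; everything else is bookkeeping, since the equivalence with $\TFG\text{-}\Modsf$ is already compatible with the forgetful functors to $\Ccal$ and therefore transports the abelian and $\Kbb$-linear structure faithfully.
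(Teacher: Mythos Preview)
Your proof is correct and follows essentially the same approach as the paper: both invoke Proposition~\ref{monadicity} together with \cite[Lemma~2.7]{shimizu2016unimodular}, reducing the claim to the right exactness of $\TFG$. The only difference is cosmetic: the paper merely asserts right exactness in passing (and later establishes full exactness of $\TFG$ via a different argument through the free functor $T^f$), whereas you supply a direct argument using that coends, being colimits, preserve right exactness.
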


\begin{lem}
$\TFG^{}: \Ccal \rightarrow \Ccal$ is an exact functor.
\end{lem}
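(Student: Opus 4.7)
Right exactness of $\TFG$ was already used in the preceding Corollary: the bifunctor $(c,X)\mapsto F(c)\otimes X\otimes G(\prescript{\vee}{}{c})$ is exact in the parameter $X$—as a composite of the exact functors $F$, $G$, the tensor product and the duality functor on the finite tensor category $\Ccal$—and coends, being colimits, commute with cokernels. It remains to show left exactness. The plan is to produce a left adjoint $L$ to $\TFG$, since by the theorem recalled in Section \ref{sec: finite} a functor between finite linear categories is left exact precisely when it admits a left adjoint.

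For the construction of $L$, note that for each fixed $c\in\Ccal$ the exact functor $A_c\colon X\mapsto F(c)\otimes X\otimes G(\prescript{\vee}{}{c})$ admits a left adjoint $B_c\colon X\mapsto F(c)^{\vee}\otimes X\otimes \prescript{\vee}{}{G(\prescript{\vee}{}{c})}$, coming directly from the duality adjunctions of the rigid category $\Ccal$. The two dualities applied to $c$ swap its variance, so $c\mapsto B_c$ defines a bifunctor $\Ccal^{opp}\times\Ccal\to\Funsf(\Ccal,\Ccal)$ exact in both arguments; dually to Lemma \ref{coend existence}, the end
\eq{
L(Y)\;=\;\int_{c\in\Ccal}F(c)^{\vee}\otimes Y\otimes \prescript{\vee}{}{G(\prescript{\vee}{}{c})}
}
therefore exists and is right exact in $Y$.

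The main step—and the main obstacle—is the adjunction $\hom_\Ccal(L(Y),X)\simeq \hom_\Ccal(Y,\TFG(X))$, since $\hom$ commutes with neither the end nor the coend. The strategy is to unpack both hom-sets through their universal properties as spaces of compatible (di- or extranatural) families of morphisms $F(c)^{\vee}\otimes Y\otimes \prescript{\vee}{}{G(\prescript{\vee}{}{c})}\to X$, equivalently $Y\to F(c)\otimes X\otimes G(\prescript{\vee}{}{c})$ via the pointwise adjunctions $B_c\dashv A_c$; one then verifies that the two naturality conditions transpose to each other, using naturality of the duality adjunctions together with the coherent compatibility of $F$, $G$ with duality (they are strict monoidal). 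An essentially equivalent route, which could be used in place of the adjunction argument, is a Radford/Nakayama-style comparison rewriting $\TFG(X)$ as an end of the form $\int_{c}F(\prescript{\vee\vee}{}{c})\otimes X\otimes G(\prescript{\vee}{}{c})$, which is then manifestly left exact in $X$ because ends commute with kernels and each integrand is exact in $X$. Either way, combining left with the already-established right exactness yields the claim.
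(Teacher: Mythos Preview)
Your adjunction argument has the universal properties backwards, and this is a genuine gap rather than a detail to be filled in. A map out of an end $\int_c B_c(Y)\to X$ is \emph{not} determined by a dinatural family $\{B_c(Y)\to X\}_c$: the universal property of an end describes maps \emph{into} it. Dually, a map $Y\to\int^c A_c(X)$ into a coend is not given by a dinatural family $\{Y\to A_c(X)\}_c$: the universal property of a coend describes maps \emph{out} of it. What is true is the adjunction $\bigl(\int^c B_c\bigr)\dashv\bigl(\int_c A_c\bigr)$, obtained exactly by the computation you sketch; but that pairs a coend on the left with an end on the right, which is the wrong way around for $\TFG=\int^c A_c$. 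So your candidate $L=\int_c B_c$ is not, by this reasoning, a left adjoint to $\TFG$, and the ``unpack both hom-sets as dinatural families'' step simply does not go through.

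Your alternative ``Radford/Nakayama'' route---rewriting the coend as an end $\int_c F(\prescript{\vee\vee}{}{c})\otimes X\otimes G(\prescript{\vee}{}{c})$---is in fact the substantive point, and is essentially equivalent in depth to the exactness statement itself; invoking it without proof is circular. The paper's argument avoids this by first reducing $\TFG$ to the untwisted central monad $T$ (pulling the exact functors $F,G$ through the colimit), then writing $T=U\circ T^f$ with $U$ the forgetful functor from $T\text{-}\Modsf$ and $T^f$ its (free-module) left adjoint, and finally citing \cite[Corollary~4.9]{shimizu2016unimodular} for the exactness of $T^f$. That citation is where the actual work---equivalent to your Nakayama-style rewriting---is done.
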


\begin{proof}
Recall that $F$, $G$ are assumed to be exact functors and exact functors commute with (co-)limits. By e.g. \cite[Section~1.2]{loregian2021} a coend is a colimit, thus 
we have
\eq{
\TFG(\bullet)=\left(F\otimes\idrm_\Ccal\otimes G\right) \circ \left(\int^{c\in\Ccal} c\otimes (\bullet)\otimes \prescript{\vee}{}{c}\right)=\left(F\otimes\idrm_\Ccal\otimes G\right) \circ T(\bullet),
}
where $T=\int^{c\in\Ccal} c\otimes (\bullet)\otimes \prescript{\vee}{}{c}$. Hence, $\TFG$ is exact if and only if $T$ is exact. As $T=U\circ T^f$, with $T^f$ the left adjoint of the exact forgetful functor $U:T-\Modsf\rightarrow \Ccal$, this holds, if and only if $T^f$ is exact. Exactness of $T^f$ is shown in \cite[Corollary~4.9]{shimizu2016unimodular}.
\end{proof}

In section \ref{Cylinder Kleisli}, we need the following result. 

\begin{lem}\label{Kleisli left exact coend}
Let $\Ccal$ be a finite tensor category and $F,G\in \Funsf(\Ccal,\Ccal)$ exact strict monoidal endofunctors. Let
\eq{
Q:\Ccal \times \Ccal^{opp} &\rightarrow \Lexsf(\Ccal \times \Ccal^{opp},\Vectsf_\Kbb)\\
(c,d)&\mapsto  \hom_\Ccal(\,(\bullet)\,,F(c)\otimes(\bullet)\otimes G(\prescript{\vee}{}{d}))\; .
}
Then the left exact coend $\oint^{c\in\Ccal}Q(c,c)$ exists and there is an isomorphism
\eq{
\oint^{c\in\Ccal}Q(c,c)(\bullet,\bullet)\simeq \hom_\Ccal(\, (\bullet)\,,\TFG(\bullet))\;  .
}
\end{lem}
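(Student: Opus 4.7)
The plan is to first establish existence of the left exact coend and then identify it with $\hom_\Ccal(-,\TFG(-))$ via a Yoneda-type reduction to the ordinary coend defining $\TFG$.

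For existence, I would verify that the integrand $H((a,b);c,d) := \hom_\Ccal(a, F(c)\otimes b \otimes G(\prescript{\vee}{}{d}))$ is left exact in each of the four arguments $a,b,c,d$. Left exactness in $a$ is the left exactness of $\hom_\Ccal(-,y)$; left exactness in $b$ and in $c$ follows from biexactness of $\otimes$ (a consequence of rigidity), exactness of $F$, and left exactness of $\hom_\Ccal(x,-)$; left exactness in $d$ uses in addition that the left duality functor is an equivalence by Lemma \ref{duality strictness}, hence exact, and that $G$ is exact. This guarantees existence of $\oint^{c}Q(c,c)$ by the definition of left exact coend recalled at the end of Section \ref{sec: finite}.

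Next I would construct the comparison morphism. Post-composition with the dinatural structure maps $\iota_c \colon F(c)\otimes b\otimes G(\prescript{\vee}{}{c}) \to \TFG(b)$ of the coend defining $\TFG$ yields a dinatural family $(\iota_c)_\ast \colon Q(c,c) \Rightarrow \hom_\Ccal(-,\TFG(-))$, natural in both bullets. The target lies in $\Lexsf(\Ccal\times\Ccal^{opp},\Vectsf_\Kbb)$ because $\TFG$ is exact (by the preceding lemma) and $\hom_\Ccal$ is bi-left-exact. The universal property of the left exact coend therefore produces a canonical morphism $\Phi \colon \oint^{c} Q(c,c) \to \hom_\Ccal(-,\TFG(-))$ in $\Lexsf(\Ccal\times\Ccal^{opp},\Vectsf_\Kbb)$.

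To check that $\Phi$ is an isomorphism, I would invoke the finite-Yoneda equivalence $\Lexsf(\Ccal,\Vectsf_\Kbb)\simeq\Ccal^{opp}$ (valid for finite $\Kbb$-linear $\Ccal$), applied variable-by-variable, to identify $\Lexsf(\Ccal\times\Ccal^{opp},\Vectsf_\Kbb)$ with a category of endofunctors of $\Ccal$ of suitable exactness class. Under this identification $Q(c,c)$ corresponds to the endofunctor $W_c := F(c)\otimes(-)\otimes G(\prescript{\vee}{}{c})$ and $\hom_\Ccal(-,\TFG(-))$ corresponds to $\TFG$ itself. Since $\TFG = \int^{c} W_c$ holds by definition in Section \ref{twisted Drinfeld} and $\TFG$ is exact, the ordinary coend already lies in the full subcategory of left exact endofunctors and therefore realizes the left exact coend as well; translating back yields that $\Phi$ is an isomorphism.

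The main obstacle is this final step: to carry it out rigorously, one must ensure the Yoneda identification is compatible with the formation of left exact coends in the mixed-variance two-variable setting. A more direct alternative, which I would fall back on if needed, is to verify the universal property of $\hom_\Ccal(-,\TFG(-))$ against an arbitrary $P \in \Lexsf(\Ccal\times\Ccal^{opp},\Vectsf_\Kbb)$ by hand: by ordinary Yoneda applied in the first variable, any dinatural family $\psi_c\colon Q(c,c) \to P$ is determined by the elements $\psi_c(F(c)\otimes b\otimes G(\prescript{\vee}{}{c}),b)(\idrm)$, and dinaturality in $c$ together with naturality in $b$ assembles these into exactly the data required to define a unique factorization through $\hom_\Ccal(-,\TFG(-))$ via the universal property of the ordinary coend $\TFG$.
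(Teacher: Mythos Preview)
Your proposal is correct, and your fallback route (b) is exactly the paper's approach: the paper simply observes that $\TFG$ is exact so $\hom_\Ccal(-,\TFG(-))$ is left exact, and then directly verifies that $\hom_\Ccal(-,\TFG(-))$ has the universal property of the left exact coend (deferring details to \cite[Proposition~9]{fuchs2016coends}, whose argument is precisely the Yoneda-in-the-first-variable computation you sketch). Two minor remarks. First, your separate existence paragraph is unnecessary: the paper has no general existence lemma for left exact coends, and once one exhibits a specific object with the universal property, existence is automatic. Second, your primary route (a) via the equivalence $\Lexsf(\Ccal,\Vectsf_\Kbb)\simeq\Ccal^{opp}$ is a slight detour the paper does not take; you correctly flag the compatibility issue, and indeed the direct verification (b) is both cleaner and what the paper intends.
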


\begin{proof}
Since $\TFG$ is an exact functor, $\hom_\Ccal(\,(\bullet)\,,\TFG(\bullet)):\Ccal \times \Ccal^{opp} \rightarrow \Vectsf_\Kbb$ is left exact. Therefore it suffices to show that $\hom_\Ccal(\,(\bullet)\, ,\TFG(\bullet))$ has the universal property of the left exact coend. This can be proven along the lines of \cite[Proposition~9]{fuchs2016coends}. Adapting the proof given there to the current situation is not hard and is left as an exercise to the reader. 
\end{proof}

\subsection{Kleisli Categories and Representable Functors}
\label{Kleisli and flat functors}

The string-net construction
will not directly give the twisted center $\Zsf_n(\Ccal)$. Hence
we recall that given any monad $(T,\mu,\eta)$,
there are several adjunctions giving rise to the same monad. In this subsection,
we review 
this theory for a general monad $T$ which is not necessarily a twisted central
monad; for a textbook account, we refer to \cite[Chapter 5]{RIeh2}.

\begin{itemize}
	\item As discussed in subsection \ref{twisted Drinfeld}, the category of $T$-modules $\Ccal^T$
	 has as objects pairs $(c,\rho)$ with
	$c\in\Ccal$ and $\rho: Tc \to c$ a morphism in $\Ccal$. The forgetful
	functor $U^T:\Ccal^T\to\Ccal$ assigns to a $T$-module $(c,\rho)$ 
	the underlying object $c\in\Ccal$.
	Its left adjoint $I^T: \Ccal\to\Ccal^T$ assigns to $c\in\Ccal$ the free module
	$Tc$ with action $\mu_c:T^2(c)\to Tc$. The monad $U^T\circ I^T$  induced
	on $\Ccal$  by
	the adjunction $I^T\dashv U^T$ is again $T$. 
	
	\item The \textit{Kleisli category $\Ccal_T$} has as objects the objects
	of $\Ccal$; whenever an object $c\in\Ccal$ is seen as an object of the
	Kleisli category $\Ccal_T$, it will be denoted by $\ov c$. 
	The $\hom$-spaces of the Kleisli category are
	$\hom_{\Ccal_T}(\ov{c},\ov{d})\coloneqq \hom_{\Ccal}(c,Td)$, for all
	$c,d\in \Ccal$. A morphism
	in $\Ccal_T$ from $\ov c$ to $\ov d$ will be denoted by
	$\ov c\rightsquigarrow \ov d$.
	The composition of morphisms in the Kleisli category $\Ccal_T$
	is 
	\eq{
		g\circ_{\Ccal_T} f\coloneqq \mu_{c_3}\circ _\Ccal T(g)\circ_\Ccal f
	}
	for $g: \ov c_2\rightsquigarrow \ov c_3$ and $f: \ov c_1 \rightsquigarrow \ov c_2$.
	The identity morphism $\ov c \rightsquigarrow \ov c$ in $\Ccal_T$ is,
	 as a morphism in $\Ccal$, the component $\eta_c: c\to Tc$ of the unit of $T$.
	
	Define a functor $I_T:\Ccal\to\Ccal_T$ which is the identity on objects
	and sends a morphism $c_1\stackrel f\to c_2$ in $\Ccal$ to the 
	morphism $\overline c_1
	\rightsquigarrow \ov c_2$ given by the morphism
	$$ I_T (f):\quad c_1 \stackrel f\to  c_2 \stackrel{\eta_{c_2}}\to Tc_2 \,\, $$
	 in $\Ccal$. Define also a functor $U_T: \Ccal_T\to\Ccal$ sending
	$\ov c\in\Ccal_T$ to $Tc\in\Ccal$ and a morphism $\ov h:\,\,\overline c
	\rightsquigarrow \ov d$ represented by the morphism $h: c\to Td$ in $\Ccal$ to
	$$ U_T(\ov h): \quad Tc\stackrel{Th}\to T^2(d)\stackrel{\mu_{d}}\to Td \,\,.$$
	By \cite[Lemma 5.2.11]{RIeh2}, this gives a pair of
	adjoint functors, $I_T\dashv U_T$, and
	that the adjunction realizes again the monad $T$ on $\Ccal$, i.e.\ $U_T\circ I_T=T$.	
	
	\item
	It is also known \cite[Proposition 5.2.12]{RIeh2} that the Kleisli
	category $\Ccal_T$ is initial and that the Eilenberg-Moore category
	$\Ccal^T$ is final in the category of adjunctions realizing the monad $T$ on $\Ccal$.
	Put differently, for any adjunction $\Dcal\stackrel{U}\to\Ccal$ and
	$\Ccal\stackrel{I}\to \Dcal$ with $I\dashv U$ and $U\circ I=T$, 
	there are unique comparison functors $K_\Dcal:\Ccal_T\to \Dcal$ and
	$K^\Dcal: \Dcal\to\Ccal^T$	such that the diagram
\begin{center}
	\begin{tikzcd}
	\Ccal_T \ar[rr,"K_\Dcal"]
	\ar[ddrr, bend left=10, "U_T"]
	& &
	\Dcal \ar[rr,"K^\Dcal"] \ar[dd, bend left=10, "U"]
	& &\Ccal^T \ar[lldd, bend left=10, "U^T"]
	\\ \phantom{bla}&&\phantom{bla}&& \\
	&&\Ccal\ar[uu, bend left=10, "I"]\ar[uurr, bend left=10, "I^T"]
	\ar[uull, bend left=10, "I_T"]&& 
   \end{tikzcd}
\end{center}		
	commutes. 
	\item
	An adjunction $I\dashv U$ that induces the monad $T=U\circ I$ 
	on $\Ccal$ is called \textit{monadic}, if 
	the comparison functor $K^\Dcal$ to the Eilenberg-Moore category $\Ccal^T$
	is an equivalence of categories.
\end{itemize}

From the string-net construction, we will recover in Theorem \ref{main theo}
the Kleisli categories of the twisted central monads as circle categories.
If $\Ccal$ is semi-simple, the twisted Drinfeld center can then be
recovered as a Karoubification \cite{kirillov2011string} or
as presheaves \cite{hoek}. For non-semi-simple categories, this does
not suffice. It is instructive to understand how
to explicitly recover  the Eilenberg-Moore category from the
Kleisli category.

Recall that all categories are linear and all functors are linear functors.
Denote by $\Dcal:=\mathrm{PSh}_{I_T}(\Ccal_T)$ the category of functors
$F:=\Ccal_T^{opp}\to\Vectsf_\Kbb$ such that the pullback by $I_T$
$$F\circ I_T^{opp}: \,\, \Ccal^{opp}\xrightarrow{I_T^{opp}}\Ccal_T^{opp}
\xrightarrow{F}\Vectsf_\Kbb $$
is representable by some object $c_F\in\Ccal$.  We then say that $F\in\Dcal$ is
an $I_T$-representable presheaf on the Kleisli category $\Ccal_T$.
In this way, we obtain a functor $U: \Dcal\to \Ccal$ sending the 
presheaf $F$ to the $I_T$-representing object $c_F\in\Ccal$.

We construct its left adjoint:
For $c\in\Ccal$, consider the functor $\hom_{\Ccal_T}(\bullet,I_Tc): \Ccal_T^{opp}\to \Vectsf _\Kbb$. The pullback of this functor along $I_T$ is representable, as follows from
the equivalences
$$ \hom_{\Ccal_T}(I_T\bullet,I_Tc) \cong \hom_\Ccal(\bullet,U_TI_Tc)
\cong \hom_\Ccal(\bullet,Tc) \,\,. $$
Note that the $I_T$-representing object of $\hom_{\Ccal_T}(\bullet,I_Tc)$ is
$Tc\in\Ccal$. We thus obtain a functor
$$ \begin{array}{rll}
I: \,\, \Ccal &\to& \Dcal \\
c&\mapsto&  \hom_{\Ccal_T}(\bullet,I_Tc)\,\,.
\end{array}$$
We have already seen that $U\circ I=T$.  It remains to see that the functors $I$ and
$U$ are adjoint,
$$\hom_\Dcal(Ic,F)\cong \hom_\Ccal(c,U (F))  \,\,,$$
where $F\in\Dcal$ is assumed to be $I_T$-representable by $c_F\in\Ccal$. Hence the
right-hand side is naturally isomorphic to $\hom_\Ccal(c,c_F)$.
For the left-hand side, we compute
$$\begin{array}{rll}
\hom_\Dcal(Ic,F)&=&\Natsf(Ic,F)= \Natsf(\hom_{\Ccal_T}(\bullet,I_Tc),F)
\cong F(I_Tc) \\
&=& \hom_\Ccal(c,c_F)
\end{array}$$
where in the first line we used the Yoneda lemma and in the second line that
$F\circ I_T$ is represented by $c_F\in\Ccal$.

We are now ready for the main result of this subsection:

\begin{prop}\label{T-mod is pullback} 
The adjunction $I\dashv U$ with
$U: \mathrm{PSh}_{I_T}(\Ccal_T)\to \Ccal$  and
$I: \Ccal\to\mathrm{PSh}_{I_T}(\Ccal_T)$ is monadic. As a consequence,
the comparison functor $K: \mathrm{PSh}_{I_T}(\Ccal_T)\to \Ccal^T$
is an equivalence of categories and the Eilenberg-Moore category
can be identified with the category of $I_T$-representable presheaves on
the Kleisli category $\Ccal_T$.
\end{prop}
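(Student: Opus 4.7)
My plan is to prove monadicity by constructing an explicit quasi-inverse $L: \Ccal^T \to \mathrm{PSh}_{I_T}(\Ccal_T)$ to the comparison functor $K$; once $K$ is an equivalence, the adjunction $I \dashv U$ is monadic by definition.

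For a $T$-module $(c,\rho)$ I will define $L(c,\rho)(\ov d) := \hom_\Ccal(d, c)$ and, on a Kleisli morphism $\ov f: \ov{d_1} \rightsquigarrow \ov{d_2}$ represented by $f: d_1 \to Td_2$ in $\Ccal$,
\eq{L(c,\rho)(\ov f)(g) := \rho \circ T(g) \circ f,\quad g \in \hom_\Ccal(d_2, c).}
Functoriality is a direct calculation: identities are preserved because $\rho \circ T(g) \circ \eta_d = \rho \circ \eta_c \circ g = g$ by naturality of $\eta$ and the module unit axiom; composition is preserved because the module associativity $\rho \circ T(\rho) = \rho \circ \mu_c$ together with naturality of $\mu$ exactly matches the Kleisli composition formula. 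Morphisms of $T$-modules are sent to natural transformations by post-composition. Pulling $L(c,\rho)$ back along $I_T^{opp}$ reduces the Kleisli action to ordinary precomposition in $\Ccal$ (again by the unit axiom), so $L(c,\rho) \circ I_T^{opp} \cong \hom_\Ccal(\bullet, c)$ is representable and hence $L(c,\rho)$ lies in $\mathrm{PSh}_{I_T}(\Ccal_T)$ with $U(L(c,\rho)) = c$.

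For $K \circ L = \idrm_{\Ccal^T}$, the underlying object is $c$ by construction. The induced $T$-action $\rho_{L(c,\rho)} := U(\epsilon_{L(c,\rho)})$ is determined via Yoneda by the element $u_{L(c,\rho)} \in L(c,\rho)(\ov c) = \hom_\Ccal(c,c)$ corresponding to the counit; the triangle identity forces $u_{L(c,\rho)} = \idrm_c$, and evaluating $L(c,\rho)(\ov{\idrm_{Tc}})(\idrm_c) = \rho \circ T(\idrm_c) \circ \idrm_{Tc} = \rho$ recovers the original action.

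The main obstacle will be showing $L \circ K \cong \idrm_\Dcal$. For $F \in \Dcal$, write $c_F := U(F)$ and fix the representability isomorphism $\alpha_\bullet: \hom_\Ccal(\bullet, c_F) \xRightarrow{\sim} F \circ I_T^{opp}$ normalized so that $u_F := \alpha_{c_F}(\idrm_{c_F})$. Because $I_T$ is bijective on objects, the candidate natural transformation $\beta_{F, \ov d} := \alpha_d: L(K(F))(\ov d) \to F(\ov d)$ is already pointwise an isomorphism; the real content is naturality in $\ov d$ with respect to a general Kleisli morphism $\ov f: \ov{d_1} \rightsquigarrow \ov{d_2}$ represented by $f: d_1 \to Td_2$. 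Using the Yoneda formula $\alpha_d(g) = F(I_T g)(u_F)$ and the Kleisli simplification $I_T g \circ \ov f = \ov{Tg \circ f}$ (a consequence of $\mu \circ T\eta = \idrm$), the right-hand side $F(\ov f)(\alpha_{d_2}(g))$ rewrites as $F(\ov{Tg \circ f})(u_F)$. The key structural identity is that for any $h: d_1 \to Tc_F$ representing $\ov h: \ov{d_1} \rightsquigarrow \ov{c_F}$ one has $F(\ov h)(u_F) = \alpha_{d_1}(\rho_F \circ h)$; this is precisely the Yoneda translation of $\rho_F = U(\epsilon_F)$. Applying this identity to both the rewritten right-hand side and to $\alpha_{d_1}(\rho_F \circ Tg \circ f) = F(I_T(\rho_F \circ Tg \circ f))(u_F)$ on the left, and using the unit axiom $\rho_F \circ \eta_{c_F} = \idrm_{c_F}$ to cancel the $\eta_{c_F}$ appearing in $I_T$, both sides collapse to $\alpha_{d_1}(\rho_F \circ Tg \circ f)$, yielding naturality. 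Naturality of $\beta$ in $F$ is then a routine diagram chase from naturality of $\alpha$, completing the proof that $K$ is an equivalence and hence that the adjunction $I \dashv U$ is monadic.
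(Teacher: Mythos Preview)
Your proof is correct and takes a genuinely different route from the paper. The paper invokes Beck's monadicity theorem: it shows that $U:\mathrm{PSh}_{I_T}(\Ccal_T)\to\Ccal$ creates coequalizers of $U$-split pairs by building the candidate coequalizer in $\Dcal$ objectwise as a cokernel in $\Vectsf_\Kbb$, and then running diagram chases (including the nine lemma) to check that the resulting presheaf is still $I_T$-representable, and represented by the given split coequalizer in $\Ccal$. You instead exhibit an explicit quasi-inverse $L$ to the comparison functor; your $L(c,\rho)$ is nothing but $\hom_{\Ccal^T}\big(J(\bullet),(c,\rho)\big)$ for the canonical fully faithful functor $J:\Ccal_T\to\Ccal^T$, which is the classical identification of the Eilenberg--Moore category with the representably-induced presheaves on the Kleisli category. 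Your argument is more elementary in that it bypasses the monadicity theorem and all colimit bookkeeping; the paper's argument has the virtue of making the (co)limit structure of $\mathrm{PSh}_{I_T}(\Ccal_T)$ explicit, since it shows directly how the relevant coequalizers are computed pointwise. One minor remark on your write-up: in the verification of naturality of $\beta$, once the identity $F(\ov h)(u_F)=\alpha_{d_1}(\rho_F\circ h)$ is established, applying it with $h=Tg\circ f$ already gives $F(\ov{Tg\circ f})(u_F)=\alpha_{d_1}(\rho_F\circ Tg\circ f)$, which is literally the left-hand side; the detour through $I_T(\rho_F\circ Tg\circ f)$ and the unit axiom is unnecessary.
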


In \cite{STREET1972149} Proposition \ref{T-mod is pullback} is proven 
in a more general setting, using bicategorical methods. The statement of
Proposition \ref{T-mod is pullback} appears as a comment
in \cite[Exercise 5.2.vii]{RIeh2}.
For the convenience of the reader, we give an explicit proof, using
the monadicity theorem \cite[Theorem 5.5.1]{RIeh2}. 

\begin{proof}
Recall the shorthand	$\Dcal:=\mathrm{PSh}_{I_T}(\Ccal_T)$. 
We have to show that $U:\Dcal\to\Ccal$ creates coequalizers of
$U$-split pairs. Thus, consider for two $I_T$-representable functors
$F_1,F_2\in\Dcal$ a parallel pair
\begin{center}
	\begin{tikzcd}
		F_1 \ar[r, "\nu_1", bend left]  \ar[r,swap, "\nu_2", bend right] 
		& F_2 
	\end{tikzcd}
\end{center}	
of natural transformations 
and assume that for $c_i:=U(F_i)\in\Ccal$ and $n_i:=U(\nu_i)$ 
for $i=1,2$ there is a  split equalizer in $\Ccal$ for the parallel pair $n_1,n_2$:
\begin{equation}
	\begin{tikzcd}
	c_1 \ar[r, "n_1", bend left]  \ar[r, bend right, swap, "n_2"]
	& c_2 \ar[r,"h"] \ar[l] & c_3\,\,. \ar[l,bend left]
\end{tikzcd}\label{split coeq}
\end{equation}	

We have to find a coequalizer $\mathrm{coeq}(\nu_1,\nu_2): F_2\to F_3$ 
in $\Dcal$
such that $U(F_3)=c_3$ and the coequalizer is mapped by $U$ to $h$.
The functors are linear and natural transformations
are vector spaces; hence we can consider the natural
transformation $\nu:=\nu_1-\nu_2: F_1\to F_2$ and determine
its cokernel in $\Dcal$. We also introduce the notation $n:=n_1-n_2: c_1\to c_2$.

We start by defining a functor $F_3: \Ccal_T^{opp}\to\Vectsf_\Kbb$
on an object $\ov \gamma\in\Ccal_T^{opp}$ as the cokernel of the components
of $\nu$
in the category of vector spaces, so that we have for each $\ov \gamma\in\Ccal_T^{opp}$ 
an exact sequence
\begin{center}
	\begin{tikzcd}
F_1(\ov \gamma) \ar[r, "\nu_{\ov\gamma}"]& F_2(\ov \gamma)
\ar[r, "q_{\ov \gamma}"] &F_3(\ov \gamma)\ar[r]&  0
	\end{tikzcd}
\end{center}	
in the category of vector spaces.
To define the functor $F_3$ on a morphism $\ov\gamma_1\stackrel{f}\to 
\ov\gamma_2$ in $\Ccal_T^{opp}$,
consider the diagram
\begin{center}
	\begin{tikzcd}
		F_1(\ov\gamma_1) \ar[r,"\nu_{\ov\gamma_1}"] \ar[d,"F_1(f)"]&F_2(\ov\gamma_1) \ar[r,"q_{\ov\gamma_1}"]
		\ar[d,"F_2(f)"]& F_3(\ov\gamma_1) \ar[r]\ar[d,dashrightarrow]&0\\
		F_1(\ov\gamma_2) \ar[r,"\nu_{\ov\gamma_2}"] &F_2(\ov\gamma_2) \ar[r,"q_{\ov\gamma_2}"]
		& F_3(\ov\gamma_2) \ar[r]&0
	\end{tikzcd}
\end{center}	
which has, by definition, exact rows.
The left square commutes because of the naturality of $\nu$. A standard diagram
chase shows that there exists a unique linear map for the dashed arrow which
we denote by $F_3(f)$. This completes $F_3$ to a functor $\Ccal_T^{opp}\to\Vectsf_\Kbb$
and shows that the components $(q_{\ov\gamma})_{\ov\gamma\in\Ccal^T}$
assemble into a natural transformation $q: F_2\to F_3$.

We have to show that the functor $F_3$ is $I_T$-representable and indeed 
represented by the object $c_3$ appearing in the split coequalizer
(\ref{split coeq}). To this end, consider the two pullbacks 
\begin{center}
	\begin{tikzcd}
\tilde F_i:=F_i\circ I_T^{opp}: \,\,\, \Ccal^{opp} \ar[r]& \Ccal_T^{opp}\ar[r]& \Vectsf_\Kbb
	\end{tikzcd}
\end{center}	
which come with isomorphisms
$$ \phi_i: \quad \tilde F_i \stackrel\sim\to \hom_\Ccal(\bullet,c_i) $$
of functors for $i=1,2$. For each $\gamma\in\Ccal$, we get a commuting diagram
\begin{equation}
	\begin{tikzcd}
\tilde F_1(\gamma)		\ar[r,"\nu_{I_T\gamma}"]\ar[d, "(\phi_1)_\gamma"]& 
\tilde F_2(\gamma) \ar[r,"q_{I_T\gamma}"]\ar[d, "(\phi_2)_\gamma"]&
\tilde F_3(\gamma) \ar[r] \ar[d, dashrightarrow]&0 \\
\hom_\Ccal(\gamma,c_1)\ar[r,"n_*"] &
\hom_\Ccal(\gamma,c_2)\ar[r,"h_*"]&
\hom_\Ccal(\gamma,c_3)\ar[r]&0. 
\end{tikzcd}\label{comm diagram}
\end{equation}	
The upper row is exact by construction. The lower row is exact, since $c_3$ was
part of a split coequalizer in $\Ccal$ and split coequalizers are preserved by
all functors. Again, a diagram chase implies the existence of
a morphism $(\phi_3)_\gamma: \tilde F_3(\gamma)\to \hom_\Ccal(\gamma,c_3)$
for the dashed arrow which by the nine lemma is an isomorphism.

To show the naturality of the morphisms $(\nu_3)_\gamma$, we take
a morphism $\gamma_1\stackrel{f}\to\gamma_2$ in $\Ccal^{opp}$ and
consider the diagram which consists of two adjacent cubes and four more
arrows:
\begin{center}
\begin{tikzcd}[row sep=scriptsize, column sep=tiny,nodes={inner sep=0pt},arrows={shorten =2pt}]
\tilde F_1(\gamma_1) \ar[rr,"\nu"]\ar[dr] \ar[dd,"\phi_1"]&& 
\tilde F_2(\gamma_1)\ar[rr,"q"]\ar[dr] \ar[dd]&&
\tilde F_3 (\gamma_1)\ar[rr] \ar[dr]\ar[dd]&& 0& \\
&\tilde F_1(\gamma_2) \ar[rr, "\nu" near start,crossing over]
&& 
\tilde F_2(\gamma_2)\ar[rr, "q" near start, crossing over] &&
\tilde F_3 (\gamma_3)\ar[rr] && 0 \\
\hom(\gamma_1,c_1)\ar[rr,"n_*" near start]\ar[dr] && 
\hom(\gamma_1,c_2)\ar[rr, "h_*" near start]\ar[dr]&&
\hom(\gamma_1,c_3)\ar[rr]\ar[dr]&& 0& \\
&\hom(\gamma_2,c_1)\ar[rr, swap, "n_*"] 
\ar[from=uu, "\phi_1" near start,  crossing over]&& 
\hom(\gamma_2,c_2)\ar[rr,swap, "h_*"]
\ar[from=uu, "\phi_2" near start,  crossing over]&&
\hom(\gamma_2,c_3)\ar[rr]\ar[from=uu, "\phi_3" near start,  crossing over]&& 0.
\end{tikzcd}
\end{center}	
To keep the diagram tidy, we do not provide all labels of the arrows and explain them
here: diagonal arrows are labeled by applying the appropriate functor to
$f:\gamma_1\to\gamma_2$. Vertical arrows are isomorphisms labeled
by $\phi_i$. The front and rear squares of the two cubes are just instances
of the commuting diagram (\ref{comm diagram}) and thus commute. 
The squares on the
top commute because $\nu$ and $q$ are natural; similarly, the squares on the bottom
commute because $n_*$ and $h_*$ are natural. The left and middle diagonal walls
commute because $\phi_1$ and $\phi_2$ are natural. A diagram chase now
yields that the rightmost wall commutes as well, which is the naturality of
$\phi_3$.

\end{proof}


\section{Progressive Graphical Calculus for  Tensor Categories}\label{Graphical calculus for Finite Tensor Categories}

It is standard to introduce a graphical calculus for  computations in (strict)  tensor categories. Following \cite{JOYAL199155}, morphisms in a (strict)  tensor category $\Ccal$ can be represented by  so-called \textit{progressive graphs}
on a standard rectangle in the $x-y$-plane.

A \textit{graph} is a $1$-dimensional, finite CW-complex $\Gamma$ with a finite, closed subset $\Gamma_0\subset \Gamma$ such that $\Gamma-\Gamma_0$ is a $1$-dimensional smooth manifold without boundary. Elements of $\Gamma_0$ are called \textit{nodes} of the graph. A node $b$ is a \textit{boundary node}, if for any connected open neighborhood $b\in U\subset \Gamma$, $U-\lbr b\rbr $ is still connected. The collection of boundary nodes is called the \textit{boundary of $\Gamma$} and is denoted by $\p \Gamma$. An \textit{edge} is a connected component $e\subset \Gamma- \Gamma_0$ homeomorphic to the interval $(0,1)$. By adjoining its endpoints to $e$, we get a closed edge $\hat{e}$. An \textit{oriented edge} is an edge with an orientation. For an oriented edge $\hat{e}$ we admit only homeomorphism $\hat{e}\simeq [0,1]$ preserving orientations.  The endpoints of $\hat{e}$ then are linearly ordered: The preimage of $0$ in $\hat{e}$, denoted by $\hat{e}(0)$, is the source and the preimage $\hat{e}(1)$ of $1$ is the target. A graph where every edge is endowed with an orientation is called an oriented graph. For an oriented graph, an edge $e$, adjacent to a node $v$, is \textit{incoming at $v$}, if $v$ is the target of $e$ and \textit{outgoing}, if $v$ is the source of $e$. This gives two, not necessarily disjoint, subsets $\mathrm{in}(v)$ and $\mathrm{out}(v)$ of incoming and outgoing edges at $v$. An oriented graph $\Gamma$ is \textit{polarized}, if for any $v\in \Gamma$, $\mathrm{in}(v)$ and $\mathrm{out}(v)$ are linearly ordered sets.  

\begin{defn} Let $(\Gamma,\Gamma_0,\p\Gamma)$ be a polarized graph and $(\Ccal,\otimes, \onebb)$ a monoidal category. A \textit{$\Ccal$-coloring} of $\Gamma$ comprises two functions
\eq{
\varphi_0:\Gamma-\Gamma_0\rightarrow \mathrm{ob}(\Ccal),\qquad \varphi_1:\Gamma_0-\p\Gamma\rightarrow \mathrm{mor}(\Ccal)
}
associating to any oriented edge of $\Gamma$ an object of $\Ccal$ and to any inner node $v\in \Gamma_0-\p\Gamma$ a morphism in $\Ccal$, with 
\eq{
\varphi_1(v):\varphi_0(e_1)\otimes \cdots \otimes \varphi_0(e_n)\rightarrow \varphi_0(f_1)\otimes \cdots\otimes \varphi_0(f_m),
}
where $e_1<\cdots < e_n$ and $f_1<\cdots < f_m$ are the ordered elements of $\mathrm{in}(v)$ and $\mathrm{out}(v)$, respectively.
\end{defn}

\begin{defn} A \textit{planar} graph is a graph $(\Gamma, \Gamma_0,\p\Gamma)$ together with a smooth embedding $\iota:\Gamma\rightarrow \Rbb^2$.
\end{defn}

For a planar graph, we will not distinguish in our notation
between the abstract graph $\Gamma$ and its embedding $\iota(\Gamma)$.
Note that a graph has infinitely many realizations as a planar graph, by choosing different embeddings.

\begin{defn} Let $a,b\in \Rbb$ with $a<b$. A \textit{progressive graph} in $\Rbb\times[a,b]$ is a planar graph $\Gamma\subset \Rbb\times [a,b]$ such that
\begin{enumerate}[label=\roman*)]
\item All outer nodes are either on $\Rbb\times \lbr a\rbr $ or on $\Rbb\times \lbr b\rbr$, i.e.
\eq{
\p \Gamma=\Gamma\cap(\Rbb\times\lbr a, b\rbr) \,\,.
}
\item The restriction of  the projection to the second component
\eq{
\prrm_2:\Rbb\times [a,b]\rightarrow [a,b]
}
to any connected component of $\Gamma-\Gamma_0$ is an injective map.
\end{enumerate}
\end{defn}

\begin{rem}
Using the injective projection to the second component, every progressive graph is oriented. In addition, it is also polarized. For any $v\in \Gamma_0$, we can pick $u\in [a,\prrm_2(v))$ such that any element of $\mathrm{in}(v)$ intersects $\Rbb\times \lbr u\rbr $. Since the graph is progressive, the intersection points are unique. The intersection points of $\mathrm{in}(v)$ with $\Rbb\times \lbr u\rbr $ are linearly ordered by the orientation of $\Rbb$ and induce a linear order on $\mathrm{in}(v)$. Similar, one defines a linear order on $\mathrm{out}(v)$ using the intersection with $\Rbb\times \lbr w\rbr $, for $w\in (\prrm_2(v),b]$.
\end{rem}

\begin{rem}
 A progressive graph cannot have cups, caps or circles, since the restriction of $\prrm_2$ to these would be non-injective. This mirrors the fact that in a
	general  non-pivotal category left and right duals for an object are not isomorphic 
and  there are no categorical traces.
Thus we should not represent (co-)evaluation morphisms simply by oriented cups and caps, but use explicitly labeled coupons. 
In addition, in the absence of a categorical trace, we cannot make sense of a circle-shaped diagram.
\end{rem}

Since a progressive graph $\Gamma$ is always polarized, we have a notion
of a $\Ccal$-coloring for it, where $\Ccal$ is a monoidal category. 
Given a $\Ccal$-coloring $\varphi\coloneqq(\varphi_0,\varphi_1)$ of $\Gamma$, we associate to every boundary node $v\in \p \Gamma$ the object in $\Ccal$ of its adjacent edge. The \textit{domain} $\domrm(\Gamma,\varphi)$ of $\Gamma$
is the linearly ordered set of objects assigned to the boundary node in $\Rbb\times\lbr a\rbr$. Its \textit{codomain} $\codomrm(\Gamma,\varphi)$ is the linearly ordered set of objects assigned to the boundary nodes in $\Rbb\times \lbr b\rbr $. 

To the pair $(\Gamma,\varphi)$ of a progressive graph $\Gamma$ with $\Ccal$-coloring $\varphi$ and $\domrm(\Gamma,\varphi)=(X_1,\cdots, X_n)$ and $\codomrm(\Gamma, \varphi) = (Y_1,\cdots, Y_m)$, we can associate a morphism in $\Ccal$
\eq{
f_\Gamma:X_1 \otimes \cdots \otimes X_n\rightarrow Y_1 \otimes \cdots \otimes Y_m \ .
}
The full technical details of this construction can be found in \cite{JOYAL199155}. We will discuss it for an example, the general procedure will then be clear. \newpage

Let $(\Gamma,\Gamma_0,\p\Gamma)$ be the following $\Ccal$-colored progressive graph:
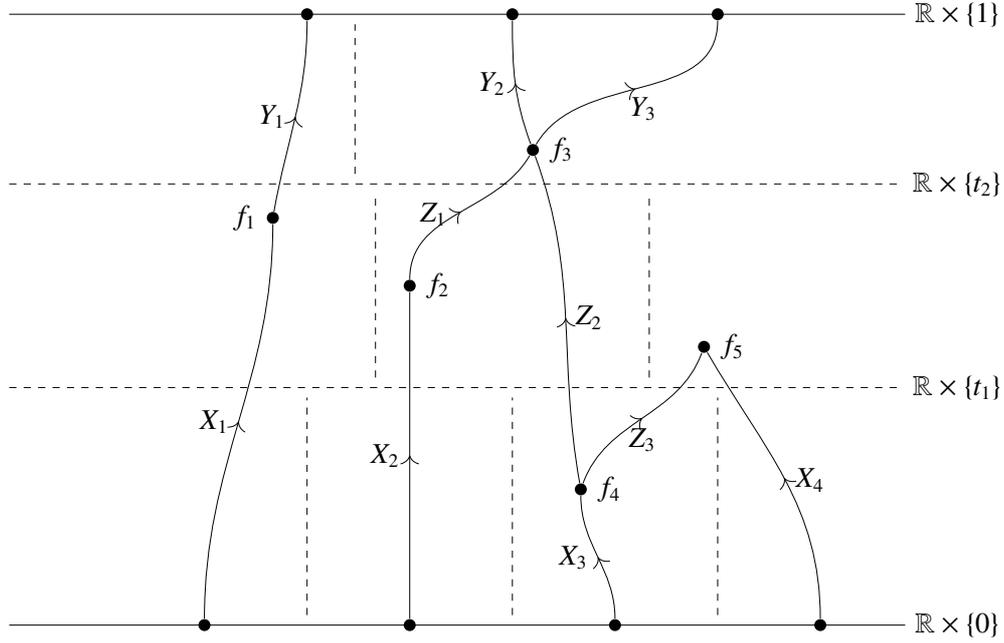
\begin{figure}[H]
    \makebox[\textwidth][c]{
    \resizebox{0.9\width}{!}{
    \begin{tikzpicture}
        
        \node[dot,label=left:$f_1$] at (-1,0) (a) {};
        \node[dot,label=right:$f_2$] at ($(a)+(2,-1)$) (b) {};
        \node[dot,label=right:$f_3$] at ($(b)+(1.8,2)$) (c) {};
        \node[dot, label=right:$f_4$] at ($(b)+(2.5,-3)$) (d) {};
        \node[dot, label=right:$f_5$] at ($(d)+(1.8,2.1)$) (e) {};
        \node[dot] at (-2,-6) (b1) {};
        \node[dot] at ($(b1)+(3,0)$) (b2) {};
        \node[dot] at ($(b2)+(3,0)$) (b3) {};
        \node[dot] at ($(b3)+(3,0)$) (b4) {};
        
        \node[dot] at (-0.5,3) (t1) {};
        \node[dot] at ($(t1)+(3,0)$) (t2) {};
        \node[dot] at ($(t2)+(3,0)$) (t3) {};
        
        \node at (-5,-6) (bl) {};
        \node at ($(bl)+(14,0)$) (br) {$\Rbb\times \lbr 0\rbr $};
        \node at (-5,3) (tl) {};
        \node at ($(tl)+(14,0)$) (tr) {$\Rbb\times \lbr 1\rbr $};
        \node at (-5,-2.5) (m1l) {};
        \node at ($(m1l)+(14,0)$) (m1r) {$\Rbb\times \lbr t_1\rbr $};
        \node at (-5,0.5) (m2l) {};
        \node at ($(m2l)+(14,0)$) (m2r) {$\Rbb\times \lbr t_2\rbr $};
        
        \node at ($(b1)+(1.5,0)$) (s1b) {};
        \node at ($(s1b)+(0,3.5)$) (s1t) {};
        \node at ($(b2)+(1.5,0)$) (s2b) {};
        \node at ($(s2b)+(0,3.5)$) (s2t) {};
        \node at ($(b3)+(1.5,0)$) (s3b) {};
        \node at ($(s3b)+(0,3.5)$) (s3t) {};
        \node at ($(s1t)+(1,0)$) (s4b) {};
        \node at ($(s4b)+(0,3)$) (s4t) {};
        \node at ($(s2t)+(2,0)$) (s5b) {};
        \node at ($(s5b)+(0,3)$) (s5t) {};
        \node at ($(s4t)+(-0.3,0)$) (s6b) {};
        \node at ($(s6b)+(0,2.5)$) (s6t) {};

		\draw[mid] (b1) to[out=90,in=270] node[left] {$X_1$} (a) ;
		\draw[mid] (a) to[out=80,in=270] node[left] {$Y_1$}(t1);
		\draw[mid] (b2) to[out=90,in=270] node[left] {$X_2$} (b);
		\draw[mid] (b) to[out=90,in=240] node[left] {$Z_1$}(c);
		\draw[mid] (c) to[out=110,in=270] node[left] {$Y_2$} (t2);
		\draw[mid] (c) to[out=60,in=270] node[below] {$Y_3$}(t3);
		\draw[mid] (b3) to[out=90,in=270] node[left] {$X_3$} (d);
		\draw[mid] (d) to[out=100,in=290] node[right] {$Z_2$}(c);
		\draw[mid] (d) to[out=70,in=250] node[below] {$Z_3$}(e);
		\draw[mid] (b4) to[out=90,in=300] node[right] {$X_4$}(e);
        \draw (bl) -- (br);
        \draw (tl) -- (tr);
        \draw[dashed] (m1l) -- (m1r);
        \draw[dashed] (m2l) -- (m2r);
        \draw[dashed] (s1b) -- (s1t);
        \draw[dashed] (s2b) -- (s2t);
        \draw[dashed] (s3b) -- (s3t);
        \draw[dashed] (s4b) -- (s4t);
        \draw[dashed] (s5b) -- (s5t);
        \draw[dashed] (s6b) -- (s6t);
       
  \end{tikzpicture}
  }
  }
  \caption{Evaluation of $\Ccal$-colored progressive graph $\Gamma$ in $\Rbb\times [0,1]$.}
  \label{figure 1}
\end{figure}

The graph has ten edges, which are colored by the objects 
\linebreak $(X_1,X_2,X_3,X_4,Z_1,Z_2, Z_3,Y_1,Y_2,Y_3)$, and 13 nodes, 5 of which are inner nodes colored by morphisms $(f_1,f_2,f_3,f_4,f_5)$. It has domain $\domrm(\Gamma)=(X_1,\cdots,X_4)$ and codomain $\codomrm(\Gamma)=(Y_1,Y_2,Y_3)$. 
In addition to the graph, we show eight auxiliary dashed lines:
\begin{enumerate}[label=\arabic*)]
\item Two horizontal ones at $\Rbb\times \lbr t_1\rbr $ and $\Rbb\times \lbr t_2\rbr $. These are called \textit{regular level lines} and their levels $0<t_1<t_2<1$ are chosen such that $\Rbb\times \lbr t_i\rbr $ does not  intersect the inner nodes $\Gamma_0-\p \Gamma$. Cutting $\Gamma$ at $\Rbb\times \lbr t_1\rbr $ and $\Rbb\times \lbr t_2\rbr $, we get three consecutive progressive graphs $\Gamma_1$, $\Gamma_2$ and $\Gamma_3$, where $\Gamma_1$ is the progressive graph in $\Rbb\times [0,t_1]$, $\Gamma_2$ is the one in $\Rbb\times [t_1,t_2]$ and $\Gamma_3$ is the top one in $\Rbb\times [t_2,1]$.
\item Six vertical lines, three in $\Gamma_1$, two in $\Gamma_2$ and one in $\Gamma_3$. Each collection of vertical lines gives a \textit{tensor decomposition} of $\Gamma_1$, $\Gamma_2$ and $\Gamma_3$, respectively. E.g. the three vertical lines in $\Gamma_1$, split it into a disjoint union of four graphs $\Gamma_1^i$, $i=1,\cdots, 4$, which are linearly ordered from left to right. Each $\Gamma_1^i$ either contains exactly one inner node or does not contain an inner node. 
\end{enumerate}
The $\Ccal$-coloring of $\Gamma$ associates to $\Gamma_1^i$ a morphism in $\Ccal$. For the graphs $\Gamma_1^i$ these are
\eq{
f_{\Gamma_1^1}=\idrm_{X_1},\quad f_{\Gamma_1^2}=\idrm_{X_2},\quad f_{\Gamma_1^3}=f_4,\quad f_{\Gamma_1^4}=\idrm_{X_4}\, ,
}
with $f_4\in \hom_\Ccal(X_3,Z_2\otimes Z_3)$ as in figure \ref{figure 1}. 
The progressive graph $\Gamma_1$  thus evaluates to the morphism
\eq{
f_{\Gamma_1}\coloneqq f_{\Gamma_1^1}\otimes f_{\Gamma_1^2}\otimes f_{\Gamma_1^3}\otimes f_{\Gamma_1^4}:X_1\otimes X_2\otimes X_3\otimes X_4\rightarrow X_1\otimes X_2\otimes Z_2\otimes Z_3\otimes X_4, 
}
i.e.
$ f_{\Gamma_1}=\idrm_{X_1}\otimes \idrm_{X_2}\otimes f_4\otimes \idrm_{X_4}$.
The morphisms $f_{\Gamma_2}$ and $f_{\Gamma_3}$ are defined analogously. The morphism associated to the whole progressive graph is given by
 \eq{\label{morphism from graph}
 f_\Gamma\coloneqq f_{\Gamma_3}\circ f_{\Gamma_2}\circ f_{\Gamma_1} \,\,. 
 }
 \begin{rem}
We highlight the two very different roles of the $x$-direction and the $y$-directions 
in the plane: The horizontal $x$-direction corresponds to the monoidal product in $\Ccal$, whereas the vertical $y$-direction corresponds to the composition of morphisms. In other words, the implicitly chosen standard $2$-framing on the strip $\Rbb\times [0,1]$ is essential for evaluating a progressive graph $\Gamma$
to a  morphism in $\Ccal$.
 \end{rem}
 
By one of the main results in \cite{JOYAL199155}, morphism $f_\Gamma:\domrm(\Gamma,\varphi)\rightarrow \codomrm(\Gamma,\varphi)$
constructed for a $\Ccal$-colored progressive graph $\Gamma$ depends neither on the choice of the regular level lines, nor on the tensor decomposition. 
Consider two $\Ccal$-colored progressive graphs $(\Gamma_1,\varphi_1)$, $(\Gamma_2,\varphi_2)$ in $\Rbb\times[0,1]$. We say that \textit{$\Gamma_1$ and $\Gamma_2$ are progressively isotopic}, if there exists an isotopy $H:[0,1]\times (\Rbb\times [0,1])$ from $\Gamma_1$ to $\Gamma_2$ such that $H(s,\bullet)|_{\Gamma_1}$ is a progressive graph for all $s\in [0,1]$. The isotopy $H$ is called a \textit{progressive isotopy}. Invariance of the associated morphism for a $\Ccal$-colored progressive graph under the auxiliary decomposition in regular levels and tensor decompositions is then linked to the invariance under progressive isotopies, i.e. if $(\Gamma_1,\varphi_1)$ and $(\Gamma_2,\varphi_2)$ are progressively isotopic, then $f_{\Gamma_1}=f_{\Gamma_2}$. 

Conversely, every morphism in $\Ccal$ can be represented by a $\Ccal$-colored graph:

\begin{figure}[H]
    \makebox[\textwidth][c]{
    \resizebox{0.9\width}{!}{
    \begin{tikzpicture}
    \node at (0,0) (a) {$f:X_1\otimes\cdots\otimes X_n\rightarrow Y_1 \otimes \cdots \otimes Y_m\quad$};
    \node at ($(a)+(3,0)$) (b) {$\mapsto$};
    \node[dot, label=left:$f$] at ($(a)+(8,0)$) (c) {};
    \node[dot] at ($(c)+(-2,-3)$) (bl) {};
    \node[dot] at ($(bl)+(4,0)$) (br) {};
    \node at ($(c)+(0,-1.5)$) (dots) {$\cdots$};
    \node[dot] at ($(bl)+(0,6)$) (tl) {};
    \node[dot] at ($(tl)+(4,0)$) (tr) {};
    \node at ($(dots)+(0,3)$) (tdots) {$\cdots$};
    \node at ($(bl)+(-2,0)$) (bll) {};
    \node at ($(br)+(2,0)$) (brr) {};
    \node at ($(bll)+(0,6)$) (tll) {};
    \node at ($(tr)+(2,0)$) (trr) {};
    
    \draw[mid] (bl) to[out=60,in=240] node[left] {$X_1$} (c);
    \draw[mid] (br) to[out=120,in=300] node[right] {$X_n$} (c);
    \draw[mid] (c) to[out=120,in=300] node[left] {$Y_1$} (tl);
    \draw[mid] (c) to[out=60,in=240] node[right] {$Y_m$} (tr);
    \draw (bll) -- (brr);
    \draw (tll) -- (trr); 
    
    \end{tikzpicture}
    }
    }
\end{figure}

Obviously, a morphism can have different realizations as a progressive graph. The graph $\Gamma$ from figure \ref{figure 1} describing the morphism $f_\Gamma$ is topologically very different from the graph with a single inner node colored by $f_\Gamma$ in equation \eqref{morphism from graph}. As in the oriented case, identifying different graphical realizations of the same morphism will be at the heart of the framed string-net construction.

\section{Framed String-Net Construction} \label{framed construction}

In this section, we define string-nets on $2$-framed surfaces. The algebraic input for
our string-net construction is  a  tensor category; as output, it produces a 
vector space for any $2$-framed surface. 
The main point of the construction is to 
globalize the discussion of progressive graphs from the standard framed plane 
in section \ref{Graphical calculus for Finite Tensor Categories}
to an arbitrary framed surface. 

\subsection{Locally Progressive Graphs}\label{Locally Progressive Graphs}
\begin{defn} Let $\Sigma$ be a smooth surface. $\Sigma$ is \textit{$2$-framed} if there exist two nowhere-vanishing vector fields $X_1,X_2\in \Gamma(T\Sigma)$ such that $((X_1)_p,(X_2)_p)\in T_p\Sigma$ is an ordered basis for every $p\in \Sigma$. The pair $(X_1,X_2)$ is a \textit{global ordered frame} for the tangent bundle $T\Sigma$ of $\Sigma$.
\end{defn}

To any vector field $X$ on $\Sigma$, we can associate its \textit{maximal flow} $\theta:D\rightarrow \Sigma$. The domain is a subset $D\subset \Rbb\times \Sigma$ where $D^{(p)}\coloneqq \lbr t\in \Rbb\, \middle|\, (t,p)\in D\rbr $ is an open interval. $D$ is called a \textit{flow domain}. The flow $\theta$ satisfies $\theta(0,p)=p$ and $\theta(t_1,\theta(t_2,p))=\theta(t_1+t_2,p)$ for all $p\in \Sigma$. The flow is \textit{maximal for $X$} in the sense that for all $p\in \Sigma$, the curve
\eq{
\theta(\bullet,p)\rightarrow \Sigma
}
is the unique maximal integral curve of $X$, i.e. $\frac{\drm}{\drm t} \theta(t,p)=X_{\theta(t,p)}$ with initial value $\theta(0,p)=p$.
For a global frame $(X_1,X_2)$ on $\Sigma$, we denote by $\theta_1:D_1\rightarrow \Sigma$ and $\theta_2:D_2\rightarrow \Sigma$ the corresponding maximal flows. The maximal integral curves for $(X_1,X_2)$ through a point $p\in \Sigma$ are denoted by $\theta_1^{(p)}:D_1^{(p)}\rightarrow \Sigma$ and $\theta_2^{(p)}:D_2^{(p)}\rightarrow \Sigma$. Since $X_1,X_2$ are nowhere-vanishing, the curves $\theta_1^{(p)}$, $\theta_2^{(p)}$ are smooth immersions for all $p\in \Sigma$. Further details on maximal flows and framed manifolds and flows can be found e.g. in \cite[Chapter~9]{lee2013smooth}.

Recall that a planar graph was defined as an abstract graph $(\Gamma,\Gamma_0, \p \Gamma)$ with a smooth map $\iota:\Gamma \rightarrow \Rbb^2$ such that $\iota|_{\Gamma-\Gamma_0}$ is a smooth embedding. Similarly, for $(\Sigma,\p\Sigma)$ a smooth surface $\Sigma$ with boundary $\p\Sigma$ an \textit{embedded graph} is an abstract graph $(\Gamma, \Gamma_0,\p\Gamma)$ together with a smooth map $\iota_\Sigma:\Gamma\rightarrow \Sigma$ such that $\iota_\Sigma|_{\Gamma-\Gamma_0}$ is an embedding and $\iota_\Sigma(\p\Gamma)=\iota_\Sigma(\Gamma)\cap \p\Sigma$. 
For an embedded graph $(\Gamma,\iota_\Sigma)$, we usually suppress the embedding $\iota_\Sigma$ from the notation. 

We want to formulate the equivalent of a progressive graph for an arbitrary $2$-framed
surface. To do so, we have to generalize the condition of injectivity of the projection to the second component that features in
the definition of a progressive graph. 
The idea is to formulate a local condition on graphs at every point on the surface. Using the global frame of a $2$-framed surface $\Sigma$, there is a neighborhood around every $p\in \Sigma$, which looks like the strip $\Rbb\times [0,1]$ and the two vector fields give the two distinguished directions on the strip. The flow lines of $X_2$ are then a natural analog of the vertical $y$-direction in the plane and we can perform a projection to $X_2$-flow lines by moving points along the flow of $X_1$ (see figure \ref{locally progressive graph}). Given an embedded graph $\Gamma\subset \Sigma$, we require that locally around every point, this projection, restricted to
$\Gamma$,  is injective. This allows us to define a local evaluation map of an embedded $\Ccal$-graph, which is the framed analog of the evaluation of graphs inside of disks in the oriented case.

A variant of the flow-out theorem \cite[Theorem 9.20]{lee2013smooth} shows that for a $2$-framed surface $\Sigma$ with global frame $(X_1,X_2)$ and corresponding flow domains $D_1$, $D_2$, for every point $p\in \Sigma$, there exist open intervals $I_1^{(p)}\subset D_1^{(p)}$, $I_2^{(p)}\subset D_2^{(p)}$ containing $0$ such that
\eq{
\phi^{(p)}:\ov{I}_1^{(p)}\times I^{(p)}_2&\hookrightarrow \Sigma\\ 
 (s,t)&\mapsto \theta_1(s,\theta_2(t,p))
 }
is a smooth embedding. Let $(\Gamma,\Gamma_0,\p\Gamma)$ be an embedded graph in $\Sigma$. An element $t\in I_2^{(p)}$ is \textit{regular} with respect to
$\Gamma$, if $\phi^{(p)}(I_1^{(p)} \times \{t\})\cap (\Gamma_0-\p\Gamma)=\emptyset$, i.e. the flow line of $X_1$ at $t$ inside $\phi^{(p)}(\ov{I}_1^{(p)}\times I_2^{(p)})$ does not contain any inner nodes of $\Gamma$. If $t_1<0<t_2$ are regular levels, the image $\phi^{(p)}(I_1^{(p)}\times [t_1,t_2])$ is called a \textit{standard rectangle} for $\Gamma$ at $p$. The restriction of $\Gamma$ to a standard rectangle at $p$ is denoted by $(\Gamma^{(p)}[t_1,t_2],\Gamma_0^{(p)}[t_1,t_2],\p\Gamma^{(p)}[t_1,t_2])$.

\begin{defn} Let $(\Sigma, (X_1,X_2))$ be a $2$-framed surface and $(\Gamma,\Gamma_0,\p\Gamma)$ an embedded graph in $\Sigma$. Then $\Gamma$ is a \textit{locally progressive graph} if for every $p\in \Sigma$, there exists a standard rectangle $\phi^{(p)}(I_1^{(p)}\times [t_1,t_2])$ for $\Gamma$ at $p$ such that the restriction of 
\eq{
\prrm^{(p)}_2\coloneqq \prrm_2\circ \left(\phi^{(p)}\right)^{-1}: \phi^{(p)}(I_1^{(p)}\times [t_1,t_2])&\rightarrow [t_1,t_2]\\
\phi^{(p)}(s,t)&\mapsto t
}
to $\Gamma^{(p)}[t_1,t_2]-\Gamma_0^{(p)}[t_1,t_2]$ is injective.
\end{defn}

To understand these definitions, it is best to consider figure \ref{locally progressive graph}. The figure shows a small patch of a $2$-framed surface $(\Sigma,(X_1,X_2))$. The red horizontal lines are flow lines of $X_1$ and the blue vertical line is a flow line of $X_2$. In black, we show an embedded graph. Each of the dashed horizontal lines intersects an edge of the embedded graph at a unique point. Transporting this intersection point along the horizontal line until we hit the vertical blue line, defines the projection map $\prrm_2^{(p)}$ evaluated at the intersection point. For the graph shown in figure \ref{locally progressive graph} the projection is obviously injective and thus, this is a locally progressive graph for the underlying
$2$-framed surface.

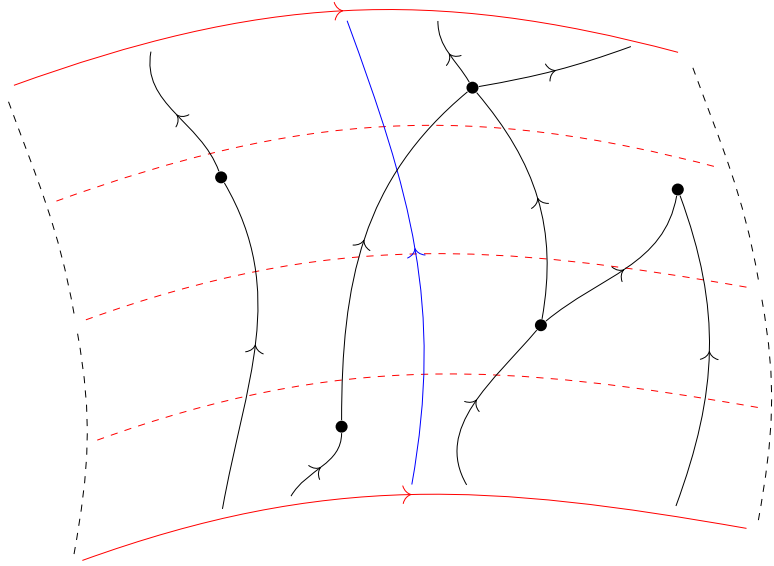
\begin{figure}
    \makebox[\textwidth][c]{
    \resizebox{0.9\width}{!}{
    \begin{tikzpicture}
    
    \node at (0,0) (tl) {};
    \node at ($(tl)+(10,0.5)$) (tr) {};
    \node at ($(tl)+(1,-7)$) (bl) {};
    \node at ($(bl)+(10,0.5)$) (br) {};
    
    \draw[mid, color=red] (tl) to[out=20,in=165] node[pos=0.2](a){} node[pos=0.5](mt) {} node[pos=0.65](b){} node[pos=0.95](c){} (tr);
    \draw[mid, color=red] (bl) to[out=20,in=170] node[pos=0.2](d){} node[pos=0.3](e){} node[pos=0.5](mb){} node[pos=0.6] (f) {} node[pos=0.9] (g) {}(br);
    \draw[color=white] (bl) to[out=80,in=290] node (ml) {} (tl);
    \draw[color=white] (br) to[out=80,in=290] node (mr) {} (tr);
    
    \draw[dashed] (bl) to[out=80,in=280] node (mbl) {} (ml);
    \draw[dashed] (ml) to[out=100,in=290] node (mtl) {} (tl);
    \draw[dashed] (br) to[out=80,in=280] node (mbr) {} (mr);
    \draw[dashed] (mr) to[out=100,in=290] node (mtr) {} (tr);
    
    \draw[dashed, color=red] (ml) to[out=20,in=170] (mr);
    \draw[dashed, color=red] (mtl) to[out=20,in=165] (mtr);
    \draw[dashed, color=red] (mbl) to[out=20,in=170] (mbr);
    \draw[mid, color=blue] (mb) to[out=80,in=290] (mt);
    
    
    \node[dot] at ($(a)+(1,-2)$) (g1) {};
    \node[dot] at ($(mb)+(-1,1)$) (g2) {};
    \node[dot] at ($(f)+(1,2.5)$) (g3) {};
    \node[dot] at ($(g3)+(2,2)$) (g4) {};
    \node[dot] at ($(g3)+(-1,3.5)$) (g5) {};
    
    \draw[mid] (d) to[out=80,in=300] (g1);
    \draw[mid] (g1) to[out=110,in=260] (a);
    \draw[mid] (e) to[out=60,in=270] (g2);
    \draw[mid] (g2) to[out=90,in=220] (g5);
    \draw[mid] (f) to[out=120,in=230] (g3);
    \draw[mid] (g3) to[out=40,in=260] (g4);
    \draw[mid] (g3) to[out=80,in=310] (g5);
    \draw[mid] (g5) to[out=120,in=270] (b);
    \draw[mid] (g5) to[out=10,in=200] (c);
    \draw[mid] (g) to[out=70,in=290] (g4);
    \end{tikzpicture}
    }
    }
    \caption{In the colored version, red horizontal lines correspond to flow lines of $X_1$ and the blue vertical line is a flow line of $X_2$. Together they yield a standard
    rectangle (and even an evaluation rectangle) for the locally progressive graph 
    shown in black.}
    \label{locally progressive graph}
    \end{figure}
\begin{defn}
Let $(\Gamma,\Gamma_0,\p\Gamma)$ be an embedded graph inside a framed surface $\Sigma$ and $\phi^{(p)}:\ov{I}_1^{(p)}\times I_2^{(p)}\hookrightarrow \Sigma$ a standard rectangle at $p$. Given two regular levels $t_1<0<t_2$ and $\left[s_1,s_2\right]\subset \ov{I}_1^{(p)}$, the image $\phi^{(p)}\left([s_1,s_2]\times [t_1,t_2]\right)$ is an \textit{evaluation rectangle} for $\Gamma$ at $p$, if 
\eq{
\Gamma\cap \phi^{(p)}(\lbr s_1,s_2\rbr\times I^{(p)}_2)=\emptyset
}
and
\eq{\label{no node intersection}
\Gamma_0\cap\phi^{(p)}\left([s_1,s_2]\times \lbr t_1,t_2\rbr\right) =\emptyset \,\,. 
}
Let now $\Ccal$ be again a  tensor category, which is not assumed to be pivotal. An evaluation rectangle at $p\in \Sigma$ for a $\Ccal$-colored graph $\Gamma$ will be denoted by $R_\Gamma^{(p)}$.
\end{defn}

Given an evaluation rectangle $R^{(p)}_\Gamma=\phi^{(p)}([s_1,s_2]\times [t_1,t_2])$ for a locally progressive graph $\Ccal$-colored graph $\Gamma$ in $\Sigma$, by \eqref{no node intersection}, only the lower and upper horizontal flow lines $\phi^{(p)}([s_1,s_2]\times {t_1})$, $\phi^{(p)}([s_1,s_2]\times {t_2})$ 
intersect edges of the graph $\Gamma$. 
We associate to each intersection point the corresponding $\Ccal$-color of the edge of $\Gamma$. Taking the tensor product of these elements according to the linear order on $[s_1,s_2]$ gives the \textit{(co-)domain of $\Gamma$ with respect to $R_\Gamma^{(p)}$}, which will be denoted by $\domrm_R(\Gamma)$ and $\codomrm_R(\Gamma)$, respectively. Note that in analogy to the (co-)domain of a progressive graph, we have
$\domrm_R(\Gamma)$, $\codomrm_R(\Gamma)\in \mathrm{ob}(\Ccal)$.

\begin{rem}\label{preimage is evaluation} From the definition of a locally progressive graph, it directly follows that the preimage of $\Gamma$ is a progressive graph in
the rectangle	
$[s_1,s_2]\times [t_1,t_2]$ for every evaluation rectangle $\phi^{(p)}([s_1,s_2]\times [t_1,t_2])$. The $\Ccal$-colored progressive graph has (co-)domain $\mathrm{(co-)dom}_R(\Gamma)$ and yields a morphism in $f_R^\Gamma\in\hom_\Ccal(\domrm_R(\Gamma),\codomrm_R(\Gamma))$. This defines an evaluation map $\nu_R(\Gamma)\coloneqq f^\Gamma_R$.
\end{rem}

\begin{rem} When defining the evaluation of a $\Ccal$-colored progressive graph, we stressed the very different roles the $x-$ and $y$-directions had in the plane. The first corresponds to taking tensor products in $\Ccal$, whereas the latter encodes the composition of morphisms. The vector fields of a global frame have similar roles for $\Ccal$-colored embedded graphs. As stated in Remark \ref{preimage is evaluation}, the $y$-flow lines define domain and codomain for the morphism corresponding to a locally progressive graph, whereas going along $x$-flow lines corresponds to taking tensor products. 
\end{rem}

\subsection{Framed String-Net Spaces}\label{Framed String-Net Spaces}

Let $\Ccal$ be a tensor category and $\Sigma$ a $2$-framed surface. We 
now define a string-net space in terms of $\Ccal$-graphs on $\Sigma$, which 
we are going to call framed string-net space. 

\begin{defn} Let $B\coloneqq \lbr p_1,\cdots, p_n\rbr \subset \p\Sigma$ be a finite and possibly empty subset of the boundary of the surface $\Sigma$ and $\nu_B:B\rightarrow \mathrm{ob}(\Ccal)$ a map. The pair $(B,\nu_B)$ is called a \textit{boundary value}.
\end{defn}

Let $(\Gamma,\Gamma_0,\p\Gamma)$ be $\Ccal$-colored embedded graph in $\Sigma$. Boundary nodes of $\Gamma$ are mapped to the boundary $\p\Sigma$ of the surface. This gives a finite subset $B_\Gamma$ of the boundary. Defining a map $\nu_\Gamma:B_\Gamma\rightarrow \mathrm{ob}(\Ccal)$ by mapping the boundary node to the $\Ccal$-color of its adjacent edge, we obtain a boundary value $(B_\Gamma,\nu_\Gamma)$ for a $\Ccal$-colored embedded graph. We call this the \textit{boundary value} of the graph $\Gamma$.

\begin{defn} The set of all $\Ccal$-colored locally progressive graphs on a $2$-framed surface $\Sigma$ with boundary value $(B,\nu_B)$ is denoted by
\eq{
\Graphrm(\Sigma,(B,\nu_B))\,\, .
}
The vector space 
\eq{
\VGraphrm_\Kbb(\Sigma,(B,\nu_B))\coloneqq \spanrm_\Kbb \Graphrm(\Sigma,(B,\nu_B))
}
freely generated by this set is called \textit{framed pre-string-net space}.
\end{defn}

From now on all string-nets on $2$-framed surfaces will be locally progressive. Similar to the construction of string-net spaces on oriented surfaces, we want to identify elements of $\VGraphrm(\Sigma,(B,\nu_B))$ if they locally evaluate to the same morphism in $\Ccal$. However, the additional datum of a $2$-framing on $\Sigma$ allows us to use evaluation rectangles of graphs instead of disks so that as an algebraic input we do not need a pivotal structure on $\Ccal$. By Remark \ref{preimage is evaluation} the preimage of a locally progressive graph inside every evaluation rectangle is a progressive graph. Thus, we can use the evaluation map for $\Ccal$-colored progressive graphs we explained in section \ref{Graphical calculus for Finite Tensor Categories}
to associate to every $\Ccal$-colored locally progressive graph and evaluation rectangle $\phi^{(p)}\left([s_1,s_2]\times [t_1,t_2]\right)$ at any point $p\in \Sigma$ a morphism in $\Ccal$.

\begin{defn}\label{null graphs}

Let $(B,\nu_B)$ be a boundary value and $\Gamma_1,\cdots,\Gamma_n\in \Graphrm(\Sigma,(B,\nu_B))$. For $\lambda_1,\cdots, \lambda_n\in \Kbb$, the element $\Gamma\coloneqq\sum_{i=1}^{n}\lambda_i\Gamma_i\in \VGraphrm_\Kbb(\Sigma,(B,\nu_B))$ is a \textit{null graph}, if there exists a common evaluation rectangle $R^{(p)}\coloneqq\phi^{(p)}\left([s_1,s_2]\times [t_1,t_2]\right) $ for all $\Gamma_i$ such that
\begin{enumerate}[label=\roman*)]
\item \eq{ \Gamma_i\cap \phi^{(p)}([s_1,s_2]\times \lbr t_1,t_2\rbr) =\Gamma_j\cap \phi^{(p)}([s_1,s_2]\times \lbr t_1,t_2\rbr)
}
for all $i,j=1,\cdots, n$.
\item $\domrm_R(\Gamma)\coloneqq\domrm_R(\Gamma_i)=\domrm_R(\Gamma_j)$ and $\codomrm_R(\Gamma)\coloneqq\codomrm_R(\Gamma_i)=\codomrm(\Gamma_j)$ for all $i,j=1,\cdots, n$. 
\item $\Gamma_i|_{\Sigma-R^{(p)}}=\Gamma_j|_{\Sigma-R^{(p)}}$ for all $i,j=1,\cdots, n$.
\item 
\eq{
\sum_{i=1}^n\lambda_i\nu_R(\Gamma_i)=0\in \hom_\Ccal(\domrm_R(\Gamma),\codomrm_R(\Gamma))\,\,.
 }
\end{enumerate}

The sub-vector space spanned by all null graphs is denoted by $\NGraphrm(\Sigma,(B,\nu_B))$.
\end{defn}

\begin{defn}\label{framed string-net space} Let $\Sigma$ be a framed surface, $\Ccal$ a tensor category and $(B,\nu_B)$ be a boundary value in $\Ccal$. The \textit{framed string-net space} with boundary value $(B,\nu_B)$ is defined as the vector space quotient
\eq{
\SNrm^{fr}(\Sigma,(B,\nu_B))\coloneqq \frac{\VGraphrm(\Sigma,(B,\nu_B))}{\NGraphrm(\Sigma,(B,\nu_B))}\,\,.
}
\end{defn}

\begin{rem}
Taking the quotient by null graphs also takes appropriate isotopies between locally progressive graphs into account. Recall that we defined locally progressive graphs as embedded graphs with a fixed embedding. Thus, a priori abstract $\Ccal$-colored graphs with different embeddings yield different elements in $\VGraphrm(\Sigma)$. By taking the above quotient, we can identify embedded graphs that differ by those isotopies such that graphs along the isotopy are all locally progressive graphs. 
\end{rem}

\section{Circle Categories and Twisted Drinfeld Centers}\label{cylinder section}

In this final section, we put our construction of string-nets for framed
surfaces to the test and compute the relevant circle categories. We show
that they are related to Drinfeld centers twisted by appropriate powers of
the double dual.

\subsection{$2$-Framings of the Circle and Framed Cylinders}\label{$2$-Framings of the Circle and Framed Cylinders}
A \textit{$2$-framing} of a circle $S^1$ is an isomorphism $\lambda:TS^1\oplus\ul{\Rbb}\xrightarrow{\simeq}\ul{\Rbb^2}$ of vector bundles, where $\ul{\Rbb}\rightarrow S^1$ and $\ul{\Rbb^2}\rightarrow S^1$ are the trivial vector bundles with fibers $\Rbb$ and $\Rbb^2$, respectively. There is a bijection \cite[section~1.1]{douglas2020dualizable}
\eq{
\lbr \text{Homotopy classes of $2$-framings of }S^1\rbr \simeq \Zbb \,\,.
}

The different $2$-framings for $n\in \Zbb$ can be depicted as follows. We identify $S^1$ as the quotient $S^1\simeq [0,1]/ 0\sim 1$ and draw a circle as an interval, while keeping in mind that we identify the endpoints. The integer $n$ then counts the number of full rotations in counterclockwise direction a frame of $\Rbb^2$ undergoes while going around the circle. We denote the circle with $2$-framing corresponding to $n\in \Zbb$ by $S^1_n$. 
We can trivially continue the $2$-framing of $S^1_n$ along the radial direction of a cylinder over $S^1$. This gives a $2$-framed cylinder $\Csf$ over
the circle, i.e.\ an annulus with a distinguished radial direction, which can be seen as $2$-framed cobordism $\Csf:S^1_n\rightarrow S^1_n$.  Possibly after a global 
rotation of the two vector fields, we can arrange that there is at least one point on
$S^1$ such that the flow line for the second vector field is radial. We fix such
a point as an auxiliary datum and call the corresponding flow line the
\textit{distinguished radial line}.

We denote the cylinder with this particular $2$-framing corresponding to $n\in\Zbb$ by $\Csf_n$. The flow lines for $\Csf_{-1}$, $\Csf_0$ and $\Csf_1$ are shown in figure \ref{different framings}.
\begin{figure}
\centering
\scalebox{0.5}{
\begin{tikzpicture}
	\draw (0,0) circle (5.5cm);
	\clip[draw] circle (5.5cm);
	\draw[postaction={decorate, decoration={
			markings, mark= between positions 0cm and 1 step 1cm with {\arrow{<[scale=1.5]}}}},blue] (0,5.5) -- (0,0);
	\foreach \x in {0,0.5,...,3}{
		\draw[postaction={decorate, decoration={
				markings, mark= between positions 0.2 and 0.8 step 0.2 with {\arrow{<[scale=1.5]}}}},blue] (0,0) arc (360:0:\x);
	}
	\draw[postaction={decorate, decoration={
			markings, mark= between positions 0.2 and 0.8 step 0.2 with {\arrow{<[scale=1.5]}}}},blue] (0,0) arc (360:0:4.5);
	\draw[postaction={decorate, decoration={
			markings, mark= between positions 0.2 and 0.8 step 0.2 with {\arrow{<[scale=1.5]}}}},blue] (0,0) arc (360:0:7.5);
	
	\draw[postaction={decorate, decoration={
			markings, mark= between positions 0.5cm and 1 step 1cm with {\arrow{<[scale=1.5]}}}},blue] (0,0) -- (0,-5.5);
	
	\foreach \x in {-3,-2.5,...,0}{
		\draw[postaction={decorate, decoration={
				markings, mark= between positions 0.2 and 0.8 step 0.2 with {\arrow{<[scale=1.5]}}}},blue] (0,0) arc (0:360:\x);
	}
	\draw[postaction={decorate, decoration={
			markings, mark= between positions 0.2 and 0.8 step 0.2 with {\arrow{<[scale=1.5]}}}},blue] (0,0) arc (0:360:-4.5);
	\draw[postaction={decorate, decoration={
			markings, mark= between positions 0.2 and 0.8 step 0.2 with {\arrow{<[scale=1.5]}}}},blue] (0,0) arc (0:360:-7.5);
	
	\draw[postaction={decorate, decoration={
			markings, mark= between positions 0.5cm and 1 step 1cm with {\arrow{<[scale=1.5]}}}},red]  (0,0) -- (5.5,0);
	\foreach \y in {0,0.5,...,3}{
		\draw[postaction={decorate, decoration={
				markings, mark= between positions 0.2 and 0.8 step 0.2 with {\arrow{<[scale=1.5]}}}},red] (0,0) arc (270:630:\y);
	}
	\draw[postaction={decorate, decoration={
			markings, mark= between positions 0.2 and 0.8 step 0.2 with {\arrow{<[scale=1.5]}}}},red] (0,0) arc (270:630:4.5);
	\draw[postaction={decorate, decoration={
			markings, mark= between positions 0.2 and 0.8 step 0.2 with {\arrow{<[scale=1.5]}}}},red] (0,0) arc (270:630:7.5);
	
	\draw[postaction={decorate, decoration={
			markings, mark= between positions 0cm and 1 step 1cm with {\arrow{<[scale=1.5]}}}},red] (-5.5,0) -- (0,0);
	\foreach \y in {0,0.5,...,3}{
		\draw[postaction={decorate, decoration={
				markings, mark= between positions 0.2 and 0.8 step 0.2 with {\arrow{<[scale=1.5]}}}},red] (0,0) arc (90:-270:\y);
	}
	\draw[postaction={decorate, decoration={
			markings, mark= between positions 0.2 and 0.8 step 0.2 with {\arrow{<[scale=1.5]}}}},red] (0,0) arc (90:-270:4.5);
	\draw[postaction={decorate, decoration={
			markings, mark= between positions 0.2 and 0.8 step 0.2 with {\arrow{<[scale=1.5]}}}},red] (0,0) arc (90:-270:7.5);
	\draw[fill=white] (0,0) circle (0.5cm);
\end{tikzpicture}}
\scalebox{0.5}{
\begin{tikzpicture}
		\draw (0,0) circle (5.5cm);
		\clip[draw] circle (5.5cm);
		\foreach \x in {0,15,...,345}{
			\draw[postaction={decorate, decoration={
					markings, mark= between positions 1.5cm and 4.5cm step 1cm with {\arrow{>[scale=1.5]}}}}, blue, rotate=\x] (0,0) -- (5.5, 5.5);
		}
		\foreach \y in {0,1,...,5}{
			\draw[postaction={decorate, decoration={
					markings, mark= between positions 0 and 1 step 0.2 with {\arrow{>[scale=1.5]}}}},red] (0,\y) arc (90:-270:\y);
		}
		\draw[fill=white] (0,0) circle (0.5cm);
\end{tikzpicture}}
\quad
\scalebox{0.5}{
\begin{tikzpicture}[decoration={
			markings,
			mark= between positions 0 and 1 step 1cm with {\arrow{>[scale=1.5]}}}
		]
		\draw (0,0) circle (5.5cm);
		\clip[draw] circle (5.5cm);
		\foreach \x in {-5,-4,...,5}{
			\draw[postaction={decorate},blue] (\x,-5.5) -- (\x, 5.5);
		}
		\foreach \y in {-5,-4,...,5}{
			\draw[postaction={decorate},red] (-5.5,\y) -- (5.5,\y);
		}
		\draw[fill=white] (0,0) circle (0.5cm);
\end{tikzpicture}}
\caption{Flow lines for the framed cylinders $\Csf_{-1}$, $\Csf_0$ and $\Csf_1$.}
\label{different framings}
\end{figure}
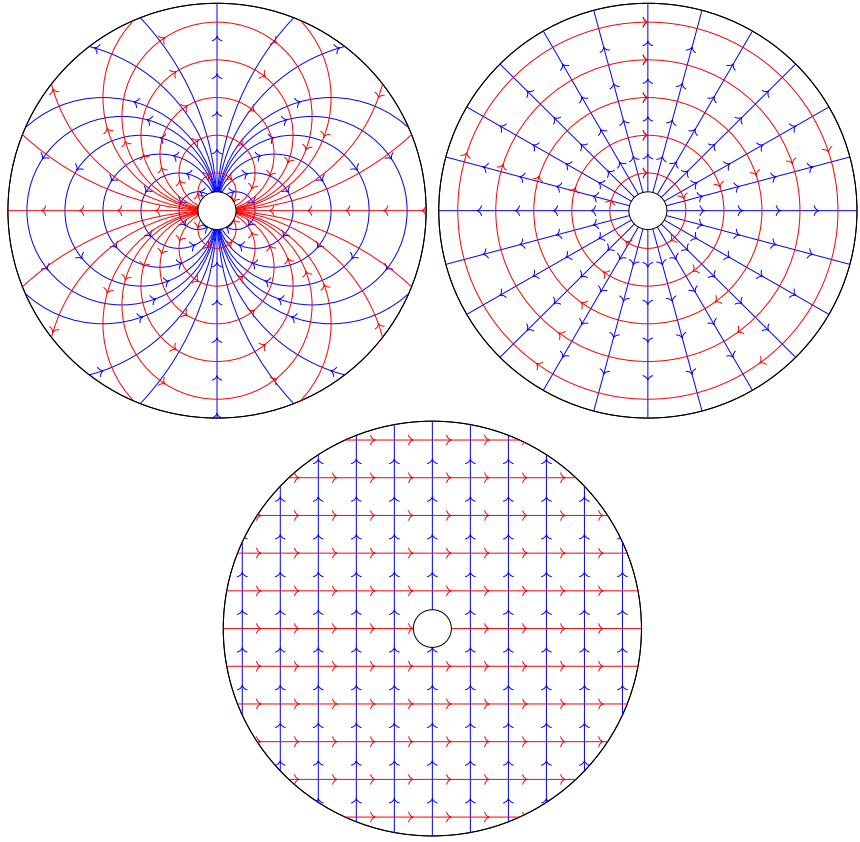

\subsection{Circle Categories}\label{Circle Categories}

Given a finite tensor category $\Ccal$ and a $2$-framed cylinder $\Csf_n$ over a one-manifold, we construct a $\Vectsf_\Kbb$-enriched category as follows. 
\begin{defn}\label{circle category}
The \textit{circle category} $\Cylsf(\Csf_n,\Ccal)$ is defined as follows:
\begin{itemize}
\item the \textit{objects} of $\Cylsf(\Csf_n,\Ccal)$ are the objects of $\Ccal$;
\item the vector space of \textit{morphisms} between two objects 
$X,Y\in \Cylsf(\Csf_n,\Ccal)$ is the framed string-net space
\eq{
\hom_{\Cylsf(\Csf_n,\Ccal)}(X,Y)\coloneqq \SNrm^{fr}(\Csf_n,B_{X,Y})
}
where we take the boundary value $B_{X,Y}\coloneqq (\lbr  p_1,p_2\rbr ,(X,Y))$ with the chosen point $p_1$ on $S^1\times \lbr 0\rbr $ and its counterpart $p_2$ on $S^1\times \lbr 1\rbr $ in $\Csf_n$.
\end{itemize}
The composition of morphisms is given by stacking cylinders and concatenating the corresponding string-nets.
\end{defn}

For the related notion of a tube category, we refer to 
\cite{haki}.

We first define a functor $I:\,\,\Ccal\to \Cylsf(\Csf_n,\Ccal)$  which is the
identity on objects. It maps a morphism $f: c_1 \to c_2$ in $\Ccal$
to the string-net which has two edges, both on the distinguished radial line,
with a single node on this line, labeled by $f$. 

In the following, we consider as an example
the \textit{blackboard framed cylinder} which is the framed surface $\Csf_1$ in figure \ref{different framings}.

\subsection{Circle Categories as Kleisli Categories}\label{Cylinder Kleisli}

To describe the morphism spaces of the circle category purely in terms of algebraic data, we need to know that string-net constructions obey factorization. This
has been discussed repeatedly in the literature, starting from
\cite[Section 4.4]{walk4}. Other references include \cite[p.\ 40]{hoek}
and \cite[Section 7]{kiTha}. The idea is that gluing relates the left exact 
functors associated to a surface to a coend. The cylinder can be obtained by gluing
a rectangle at two opposite boundaries; taking the insertions at the remaining
boundaries into account and using the fact that for the rectangle string-net spaces
give morphisms in $\Ccal$, the idea to implement factorization by a coend yields
\eq{
	\hom_{\Cylsf(\Csf_1,\Ccal)}(\bullet,\bullet)
	\cong\oint^{c\in\Ccal}\hom_\Ccal(\,(\bullet)\,,c\otimes(\bullet)\otimes \prescript{\vee}{}{c})\,\, .
\label{walker fact}	
}

\begin{lem}\label{cylinder left exact coend}
Let $X$, $Y\in \Ccal$ be two objects of a finite tensor category $\Ccal$. Then there is an isomorphism of vector spaces
\eq{
\hom_{\Cylsf(\Csf_1,\Ccal)}(x,y)\simeq \hom_\Ccal(x,Ty)
}
where $T\coloneqq {}_{\idrm}T_\idrm$ is the usual central monad of $\Ccal$.
\end{lem}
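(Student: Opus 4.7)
The plan is to combine the factorization property of framed string-net spaces with the identification of the central monad as a left exact coend provided by Lemma \ref{Kleisli left exact coend}.

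First I would use the factorization formula (\ref{walker fact}) as the starting point: cutting the cylinder $\Csf_1$ along the distinguished radial line produces a framed rectangle whose two vector fields match the standard $2$-framing of the strip $\Rbb\times[0,1]$ used in Section \ref{Graphical calculus for Finite Tensor Categories}. On such a rectangle the framed string-net space with boundary value $(x,c_1,y,c_2)$ coincides with the morphism space $\hom_\Ccal(x, c_1\otimes y\otimes c_2)$, as is immediate from the progressive graphical calculus: every locally progressive graph on the rectangle pulls back to a progressive graph in the plane and its null-graph equivalence class is classified by the associated morphism.

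Next, I would analyse the gluing. When re-gluing the two vertical sides of the rectangle to reconstruct $\Csf_1$, a boundary edge labeled $c$ on one side must be identified with a boundary edge on the opposite side; because the blackboard framing of $\Csf_1$ has the first vector field pointing radially outward on one side of the cut and radially inward on the other (reflecting the nontrivial rotation of the frame around the circle), the matching of the polarisation forces the label on the second side to be the left dual $\prescript{\vee}{}{c}$. Hence the gluing becomes a dinatural identification in $c\in\Ccal$ and factorisation yields an isomorphism
\eq{
\hom_{\Cylsf(\Csf_1,\Ccal)}(x,y)\;\simeq\; \oint^{c\in\Ccal}\hom_\Ccal(x,\,c\otimes y\otimes \prescript{\vee}{}{c}).
}
Applying Lemma \ref{Kleisli left exact coend} with $F=G=\idrm_\Ccal$ then identifies the right-hand side with $\hom_\Ccal(x,T(y))$ for $T={}_\idrm T_\idrm$, which is the desired conclusion.

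The main obstacle is not the algebraic manipulation but the careful bookkeeping in the second step: one has to justify that the framed factorisation really produces the contravariant twist by $\prescript{\vee}{}{(\bullet)}$ on one side of the glued edge, and that the universal property of the framed string-net space matches the universal property of the left exact coend (including invariance of the resulting class under the choice of evaluation rectangle near the glued line). This is where the framing condition on graphs—namely, that edges are never tangent to the $X$-vector field—interacts with left duality, because reversing the distinguished radial direction at the cut corresponds precisely, in the graphical calculus, to replacing an object by its left dual. Once this identification is in place, the remainder of the argument is a direct invocation of Lemma \ref{Kleisli left exact coend}.
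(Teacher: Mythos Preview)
Your proposal is correct and follows essentially the same approach as the paper: invoke the factorization formula (\ref{walker fact}) and then apply Lemma \ref{Kleisli left exact coend} with $F=G=\idrm_\Ccal$. The paper's proof is in fact just these two lines; your additional discussion of why the glued edge acquires a left dual is helpful commentary, but the paper simply takes (\ref{walker fact}) as given from the cited references rather than rederiving it.
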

\begin{proof}
Recall from Lemma \ref{Kleisli left exact coend} that 
\eq{
\hom_\Ccal(x,Ty)=\oint^{c\in\Ccal}\hom_\Ccal(\,(\bullet)\,,c\otimes(\bullet)\otimes \prescript{\vee}{}{c})(x,y)\,\, .
}
and combine it with the factorization (\ref{walker fact}).
\end{proof}

\begin{theo}\label{main theo}
There is an equivalence of $\Vectsf_\Kbb$-enriched categories 
\eq{
\Cylsf(\Csf_1,\Ccal) \cong \Ccal_T\, .
}
\end{theo}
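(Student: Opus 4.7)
The plan is to construct an explicit functor $\Phi: \Ccal_T \to \Cylsf(\Csf_1, \Ccal)$ that is the identity on objects, and show it is fully faithful via Lemma \ref{cylinder left exact coend}. On morphisms I would use the coend description $Ty = \int^{c\in\Ccal} c \otimes y \otimes \prescript{\vee}{}{c}$: a Kleisli morphism represented by $f: x \to Ty$ factors, by the universal property of the coend, through $\iota_c: c \otimes y \otimes \prescript{\vee}{}{c} \to Ty$ for some object $c$, giving $\tilde{f}: x \to c \otimes y \otimes \prescript{\vee}{}{c}$. By the graphical calculus of Section \ref{Graphical calculus for Finite Tensor Categories}, I represent $\tilde{f}$ as a $\Ccal$-colored progressive graph in a standard rectangle with input $x$ on the bottom edge, output $y$ on the top edge, and labels $c$ and $\prescript{\vee}{}{c}$ on the left and right vertical edges. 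Closing this rectangle along the distinguished radial line of $\Csf_1$ yields a locally progressive $\Ccal$-colored graph on $\Csf_1$, whose class in the framed string-net space defines $\Phi(f)$. Independence of this class from the coend representative is precisely the dinaturality relation built into the null-graph quotient of Definition \ref{null graphs}.

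For bijectivity on hom-spaces I would invoke Lemma \ref{cylinder left exact coend} together with the factorization identity (\ref{walker fact}); the inverse of $\Phi_{x,y}$ cuts a cylindrical string-net along a longitude, producing a progressive graph in a rectangle whose coend class gives an element of $\hom_\Ccal(x,Ty)$. Because $\Phi$ is the identity on objects and will induce this bijection on hom-spaces, it will automatically be an equivalence (in fact an isomorphism) of $\Vectsf_\Kbb$-enriched categories once functoriality is established.

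Functoriality has two parts. The Kleisli identity $\eta_c: c \to Tc$ factors through $\iota_\onebb$ with representative $\idrm_c$, and closing the corresponding rectangle gives the single-radial-edge graph labeled by $c$, which is the identity of $c$ in $\Cylsf(\Csf_1, \Ccal)$ under the stacking composition of Definition \ref{circle category}. For composition, given $f: x \to Ty$ and $g: y \to Tz$ with representatives $\tilde{f}_c: x \to c \otimes y \otimes \prescript{\vee}{}{c}$ and $\tilde{g}_d: y \to d \otimes z \otimes \prescript{\vee}{}{d}$, the Kleisli composite $\mu_z \circ T(g) \circ f$ factors through $\iota_{d \otimes c}$ with representative
\eq{
(\idrm_c \otimes \tilde{g}_d \otimes \idrm_{\prescript{\vee}{}{c}}) \circ \tilde{f}_c : x \to c \otimes d \otimes z \otimes \prescript{\vee}{}{d} \otimes \prescript{\vee}{}{c} = (d \otimes c) \otimes z \otimes \prescript{\vee}{}{(d \otimes c)} \,\, .
}
Geometrically this is obtained by stacking the two rectangles representing $\tilde{f}_c$ and $\tilde{g}_d$, concatenating the string-nets and closing the result along the distinguished radial line; the side labels concatenate into a single label $d \otimes c$ on the left and $\prescript{\vee}{}{(d \otimes c)}$ on the right, which is precisely the Fubini identification underlying the definition of $\mu$ in subsection \ref{twisted Drinfeld}. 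Combined with the stacking composition of string-nets on the circle category side, this shows $\Phi(g \circ_{\Ccal_T} f) = \Phi(g) \circ \Phi(f)$.

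The main obstacle is the compatibility with composition, that is, translating the geometric stacking of framed cylinders into the algebraic Fubini-Kleisli multiplication of $T$. The framing data is essential at this step: the blackboard framing of $\Csf_1$ makes the cut rectangle canonical (up to contractible choice) and ensures that the glued graph remains locally progressive, with no new tangencies to the vector field $X_1$ appearing at the gluing locus. Once these bookkeeping issues are settled, the theorem follows at once: a linear functor that is the identity on objects and a bijection on hom-spaces is automatically an isomorphism, hence an equivalence, of $\Vectsf_\Kbb$-enriched categories.
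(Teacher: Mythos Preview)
Your construction of $\Phi$ has a gap at the very first step. The claim that a morphism $f: x \to Ty$ ``factors, by the universal property of the coend, through $\iota_c$ for some object $c$'' is unjustified: the universal property of a coend governs maps \emph{out} of the coend, not maps into it. Concretely, the comparison map
\[
\int^{c\in\Ccal}\hom_\Ccal(x,\,c\otimes y\otimes \prescript{\vee}{}{c})\;\longrightarrow\; \hom_\Ccal(x,Ty),\qquad [h]\mapsto (\iota_c)_y\circ h
\]
(ordinary coend in $\Vectsf_\Kbb$ on the left) need not be surjective when $\Ccal$ is not semisimple, because $\hom_\Ccal(x,-)$ does not preserve the colimit defining $Ty$. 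This is precisely why Lemma~\ref{Kleisli left exact coend} invokes the \emph{left exact} coend $\oint$ rather than the ordinary one. Hence your $\Phi$ is not a priori defined on all of $\hom_{\Ccal_T}(\ov{x},\ov{y})$, and the subsequent well-definedness and functoriality checks, which are phrased entirely in terms of a chosen representative $\tilde f$, do not get off the ground.

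The paper avoids this by building the functor in the opposite direction, $\kappa:\Cylsf(\Csf_1,\Ccal)\to\Ccal_T$, defined on hom-spaces directly via the isomorphism of Lemma~\ref{cylinder left exact coend}. On the string-net side every class \emph{does} admit a standard form: cutting along the distinguished radial line yields a progressive graph in a rectangle, hence an honest morphism $h:x\to c\otimes y\otimes \prescript{\vee}{}{c}$ for some $c$, and under $\kappa$ this is sent to $(\iota_c)_y\circ h$. Your stacking/Fubini argument for composition then goes through verbatim in this direction. In short, what you describe as ``the inverse of $\Phi_{x,y}$'' is the map that must be constructed first; bijectivity is read off from Lemma~\ref{cylinder left exact coend} a posteriori, rather than being used to manufacture representatives on the Kleisli side a priori.
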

\begin{proof}
Note that the circle category $\Cylsf(\Csf_1,\Ccal)$ and the Kleisli category $\Ccal_T$ have the same objects as $\Ccal$. Thus we can define a functor 
\eq{
\kappa:\Cylsf(\Csf_1,\Ccal)\rightarrow  \Ccal_T
}
which is the identity on objects and acts on morphism spaces via the isomorphism induced by Lemma \ref{cylinder left exact coend}. For $\kappa$ to be a functor, we need to check that it respects identity morphisms and composition of morphisms. For $\ov{x}$, $\ov{y}\in \Ccal_T$, it holds that $\hom_{\Ccal_T}(\ov{x},\ov{y})=\hom_\Ccal(x,Ty)$. Let $\lbr \iota_c : c\otimes (\bullet) \otimes \prescript{\vee}{}{c}\Rightarrow T(\bullet)\rbr_{c\in \Ccal}$ be the universal dinatural family for the coend $T$. Then $\lbr (\iota_c)_\ast:\hom_\Ccal((\bullet),c\otimes (\bullet)\otimes \prescript{\vee}{}{c})\Rightarrow \hom_\Ccal((\bullet),T(\bullet))\rbr_{c\in \Ccal}$ is the universal dinatural family for the left exact coend $\hom_\Ccal((\bullet),T(\bullet))\simeq \oint^{c\in \Ccal}\hom_\Ccal((\bullet),c\otimes (\bullet)\otimes \prescript{\vee}{}{c})$. From the proof of Lemma \ref{cylinder left exact coend}, we get that $\kappa$ maps a string-net of the following form as

\eq{
\cylinder{
		\point{1}(0,-0.5)({},north)
		\point{0}(0,-5.5)({},north)
		\point{A}(0,-3)($h$,north east)
		\point{B}(0,3)($\evrm_c$,south)
		\line(A,B)(150,210,1.5)($c$,east)
		\line(A,B)(30,-30,1.5)($\ld{c}$,west)
		\line(0,A)(90,270,1)($x$,west)
		\line(A,1)(90,270,1)($y$,west)
	} \mapsto (\iota_c)_y \circ h \in \hom_{\Ccal}(x,Ty)\,\,.
}
For the identity in $\hom_{\Cylsf(\Csf_1,\Ccal)}(x,x)$, we get
\eq{
\cylinder{
		\point{1}(0,-0.5)({},north)
		\point{0}(0,-5.5)({},north)
		\point{A}(0,-3)($\idrm$,north east)
		\line(0,A)(90,270,1)($x$,west)
		\line(A,1)(90,270,1)($x$,west)
	} \mapsto (\iota_\onebb)_x \circ \idrm_{x} \in \hom_{\Ccal}(x,Tx)\,\,.
}
The morphism $\iota_\onebb: x \rightarrow Tx$ is the unit of the monad $T$ and thus corresponds to the identity morphism in $\hom_{\Ccal_T}(\ov{X},\ov{X})$. Composing two string-nets on $\Csf_1$ in standard form, we get 
\eq{
\cylinder{
		\point{1}(0,-0.5)({},north)
		\point{0}(0,-5.5)({},north)
		\point{A1}(0,-3+0.75)($g$,north east)
		\point{A2}(0,-3-0.75)($h$,north east)
		\point{B1}(0,3-0.75)($\evrm_d$,south)
		\point{B2}(0,3+0.75)($\evrm_c$,south)
		\line(A1,B1)(150,210,1.5)($d$,east)
		\line(A1,B1)(30,-30,1.5)($\ld{d}$,west)
		\line(A2,B2)(150,210,1.5)($c$,east)
		\line(A2,B2)(30,-30,1.5)($\ld{c}$,west)
		\line(A1,1)(90,270,1)($z$,west)
		\line(A2,A1)(90,270,1)($y$,west)
		\line(0,A2)(90,270,1)($x$,west)
	}
    & =
	\cylinder{
		\point{1}(0,-0.5)({},north)
		\point{0}(0,-5.5)({},north)
		\point{A}(0,-3)($(\idrm \otimes g \otimes \idrm) \circ h$,north east)
		\point{B}(0,3)($\evrm_{c \otimes d}$,south)
		\line(A,B)(150,210,1.5)($c \otimes d$,east)
		\line(A,B)(30,-30,1.5)($\ld{(c \otimes d)}$,west)
		\line(0,A)(90,270,1)($x$,west)
		\line(A,1)(90,270,1)($z$,west)
	}
\\
  &  \mapsto (\iota_{(c \otimes d)})_z \circ (\idrm \otimes g \otimes \idrm) \circ h\,\,.
}
There is the commutative diagram
\begin{center}
\begin{tikzcd}
x \ar[r,"h"]\ar[dr,bend right, "(\iota_c)_y \circ h"'] & c \otimes y \otimes \prescript{\vee}{}{c}\ar[r,"\idrm\otimes g\otimes \idrm"]\ar[d,"(\iota_c)_y"] & c\otimes d\otimes z\otimes \prescript{\vee}{}{d} \otimes \prescript{\vee}{}{c} \ar[dr, bend left, "(\iota_{c\otimes d})_z"]\ar[d,"(\iota_c)_{T(z)} \circ(\idrm \otimes (\iota_d)_z \otimes \idrm)"] & \\
& Ty \ar[r,"T((\iota_d)_z \circ g)"] & T^2(z)\ar[r,"\mu_z"] & Tz.
\end{tikzcd}
\end{center}

The lower path is the composition $(\alpha_d \circ g)\circ_{\Ccal_T} (\alpha_c \circ h)$ in $\Ccal_T$. By Lemma \ref{cylinder left exact coend}, $\kappa$ is fully faithful and since it is essentially surjective, it is an equivalence.
\end{proof}

Recall the functor $I:\,\,\Ccal\to \Cylsf(\Csf_1,\Ccal)$  introduced
at the end of section \ref{Circle Categories}. Under the equivalence between
the circle category and the Kleisli category, it is mapped to the induction
functor $I_T:\,\,\Ccal\to \Ccal_T$. Combining 
from Theorem \ref{main theo}, Proposition \ref{T-mod is pullback} and Proposition \ref{monadicity}, we obtain

\begin{theo}\label{main theo flat}
Let $\mathrm{PSh}_{I}(\Cylsf(\Csf_1,\Ccal))$ be the category
of $I$-representable presheaves on the circle category
$\Cylsf(\Csf_1,\Ccal)$. 
There is an equivalence of $\Kbb$-linear categories
\eq{
	\mathrm{PSh}_{I}(\Cylsf(\Csf_1,\Ccal))
\cong \Zsf(\Ccal)\, ,
}
\end{theo}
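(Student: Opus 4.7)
The plan is to assemble the desired equivalence as a composition of the three equivalences already established in the paper. First, Theorem \ref{main theo} provides an equivalence $\kappa : \Cylsf(\Csf_1,\Ccal) \xrightarrow{\simeq} \Ccal_T$. The text preceding the statement observes that $\kappa\circ I \cong I_T$; I would verify this by examining the explicit action of $\kappa$ on a string-net with a single coupon labelled $f:c_1\to c_2$ sitting on the distinguished radial line. Under the coend-based isomorphism of Lemma \ref{cylinder left exact coend}, this string-net maps to $(\iota_\onebb)_{c_2}\circ f = \eta_{c_2}\circ f$, which is precisely the value of the Kleisli induction functor $I_T$ on $f$.

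From this intertwining I would deduce, by pulling back presheaves along $\kappa^{opp}$, an equivalence of the full subcategories of presheaves whose pullback to $\Ccal$ is representable:
\eq{
\mathrm{PSh}_I(\Cylsf(\Csf_1,\Ccal)) \simeq \mathrm{PSh}_{I_T}(\Ccal_T).
}
The point is that for any $F \in \mathrm{PSh}(\Cylsf(\Csf_1,\Ccal))$, the pullback $F\circ I^{opp}$ is naturally isomorphic to $(F\circ \kappa^{opp})\circ I_T^{opp}$, so representability of one is equivalent to representability of the other, with the same representing object in $\Ccal$. Next, Proposition \ref{T-mod is pullback}, applied to the central monad $T = {}_\idrm T_\idrm$, identifies $\mathrm{PSh}_{I_T}(\Ccal_T)$ with the Eilenberg-Moore category $\Ccal^T = T-\Modsf$. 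Finally, Proposition \ref{monadicity}, specialized to $F = G = \idrm_\Ccal$, identifies $T-\Modsf$ with $\Zsf_1(\Ccal) = \Zsf(\Ccal)$. Chaining these three equivalences yields the theorem.

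The main obstacle is really only the first step: making precise the intertwining $\kappa \circ I \cong I_T$ and checking that it descends to the subcategories of $I$-representable presheaves. Once that compatibility is in hand, the conclusion is a formal three-term chain of equivalences of $\Kbb$-linear categories. A minor point to address along the way is that Propositions \ref{T-mod is pullback} and \ref{monadicity} are formulated at the level of $\Vectsf_\Kbb$-enriched categories, which is exactly what is needed here since the circle category is $\Vectsf_\Kbb$-enriched by construction and $\kappa$ is an equivalence of $\Vectsf_\Kbb$-enriched categories.
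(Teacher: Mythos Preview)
Your proposal is correct and follows essentially the same approach as the paper, which simply states that the result is obtained by combining Theorem \ref{main theo}, Proposition \ref{T-mod is pullback} and Proposition \ref{monadicity}, after noting that $I$ corresponds to $I_T$ under the equivalence $\kappa$. In fact you supply more detail than the paper does, in particular the explicit verification that $\kappa\circ I\cong I_T$ via $(\iota_\onebb)_{c_2}\circ f=\eta_{c_2}\circ f$ and the argument that pullback along $\kappa^{opp}$ preserves the representability condition.
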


\begin{rem}\label{extension to all n remark}\hspace{2em}
\begin{enumerate}
	\item 
Since $\Ccal$ is not required to be fusion, the Karoubification of the circle category $\Cylsf(\Csf_1,\Ccal)$ does not, in general, yield the full center
$\Zsf(\Ccal)$.

 Recall that a projective module for a monad is a retract of a free module (cf. \cite[Section~7.3.2]{turaev2017monoidal}). The Karoubification of the Kleisli
 category only yields the subcategory of $\Zsf(\Ccal)$ which has as objects 
 the objects that under the equivalence $T-\Modsf\simeq \Zsf(\Ccal)$
 correspond to projective $T$-modules. This was our motivation to discuss
  a different completion of the Kleisli category as $I$-representable presheaves on
  the Kleisli category in section \ref{Kleisli and flat functors}.

\item
For the general $2$-framed cylinder $\Csf_n$,  the $2$-framing forces us
to add sufficiently many evaluations and coevaluations so that we
get an equivalence
\eq{\mathrm{PSh}_{I}(\Cylsf(\Csf_n,\Ccal))\simeq \Zsf_n(\Ccal)\,\,.} 
The proof of this is in complete analogy to the case of $\Csf_1$.  
\end{enumerate}	
\end{rem}

Our computation of circle categories for string-nets on framed cylinders $\Csf_n$ is in complete accordance with the results of \cite[Corollary~3.2.3, Table~3]{douglas2020dualizable}. 
\bibliographystyle{alpha}
\bibliography{bibliography_arx}

\end{document}